\def\thm@space@setup{%
  \thm@preskip=0.5em\thm@postskip=\thm@preskip%
}
\newtheoremstyle{named}{}{}{\\itshape}{}{\bfseries}{.}{.5em}{\thmnote{#3's }#1}
\theoremstyle{named}
\theoremstyle{plain}
\newtheorem{thm}{Theorem}[section]
\newtheorem{definition}[thm]{Definition}
\newtheorem{proposition}[thm]{Proposition}
\newtheorem{lem}[thm]{Lemma}
\newtheorem{cor}[thm]{Corollary}
\newtheorem{prop}[thm]{Proposition}
\newcommand{\conj}[1]{\overline{#1}}
\newcommand{\tr}{\operatorname{tr}}
\newcommand{\roi}[1]{\mathcal{O}_{#1}}
\newcommand{\prim}{\mathfrak{p}}
\newcommand{\val}{\mathrm{val}}
\newcommand\restr[2]{{
  \left.\kern-\nulldelimiterspace 
  #1 
  \vphantom{\big|} 
  \right|_{#2} 
  }}
\newcommand{\catname}[1]{{\textsf{{#1}}}}
\newcommand{\Herm}{\catname{PtdHerm}}
\newcommand{\Proj}{\catname{PtdProjHerm}}
\newcommand{\Quat}{\catname{Quat}}
\newcommand{\Ord}{\catname{Ord}}
\begin{document}
\author{Gordan Savin}
\address{Department of Mathematics, Salt Lake City, UT 84112}
\email[Gordan Savin]{savin@math.utah.edu}
\author{Michael Zhao}
\address{Department of Mathematics, Salt Lake City, UT 84112}
\email[Michael Zhao]{mzhaox@gmail.com}
\title{Binary Hermitian Forms and Optimal Embeddings}
\date{\today}
\maketitle

\begin{abstract} Fix a quadratic order over the ring of integers. 
An embedding of the quadratic order into  a quaternionic order naturally gives an integral binary hermitian form over the quadratic order. We show that, in certain cases, this 
correspondence is 
a discriminant preserving bijection between the isomorphism classes of embeddings and integral binary hermitian forms. 
\end{abstract} 
\section{Introduction}

Let $K$ be a field of any characteristic and fix $L$, a separable quadratic extension of $K$. Let $n_L : L \rightarrow K$, $n_L(l)=l\bar l$,  be the norm map.
In this paper we study relationship between embeddings of $L$ into a quaternion algebra and vector spaces over $L$ with a hermitian form, 
as well as an integral version of this problem. 
Let $V$ be a vector space over $L$.  A hermitian form on $V$ is a 
quadratic form $h: V \rightarrow K$ such that $h(lv) = n_L(l) h(v)$ for all $l\in L$ and $v\in V$. This definition is different than what is customary in 
the literature, where a hermitian symmetric form is a function $s : V \times V \rightarrow L$, linear in the first variable, 
and $s(y,x)= \conj{s(x,y)}$ for all $x,y\in V$. However, given $h$, we show that there exists a unique $s$ 
 such that $h(v)=s(v,v)$ for all $v\in V$, even if the characteristic of $K$ is 2.

 Let $i: L \rightarrow H$ be an embedding of $L$ into a quaternion algebra $H$. Let $n: H \rightarrow K$ be the (reduced) norm. 
 Then $H$ is naturally a vector space over $L$ of dimension 2,  with a hermitian form $h: = n$  representing 1, since 
$n(1)=1$. Conversely, let $(V,v,h)$ be a triple consisting of a 2-dimensional vector space $V$ over $L$, a non-degenerate hermitian form $h$ on $V$, and a point $v\in V$ such that 
$h(v)=1$. Then one can define a quaternion algebra $H_V$ such that $H_V=V$, as vector spaces over $K$,  $v$ is the identity element of $H_V$,  
 and $l \mapsto l v$ is an embedding of $L$ into $H_V$, where 
$l v $ is the scalar multiplication inherited from $V$. The reduced norm is, of course, equal to the hermitian form $h$. This discussion can be placed in a categorical context; 
the following two categories are isomorphic: 
 one whose objects are pairs $(i,L)$ where $i$ is an embedding of $L$ into a quaternion algebra, the other whose objects are triples $(V,v,h)$ of 
pointed, non-degenerate, binary hermitian spaces. 
We now observe that different choices of the point $v$ representing 1 give isomorphic objects. Indeed, if $u\in V$ is another element in $V$ representing 1, then $u$ is a unit element in the quaternion algebra 
$H_V$ arising from the triple $(V, v, h)$. Right multiplication by $u$ gives an isomorphism  of the triples $(V, v,h)$ and $(V, u, h)$. 
As a consequence, there is a bijection between isomorphism 
classes of embeddings of $L$ into quaternion algebras and non-degenerate binary hermitian spaces $(V,h)$ representing 1. 

As the next result, we establish an integral version of this bijection. More precisely, assume that $K$ is the field of fractions of a Dedekind domain $A$, and $B$  the integral 
closure of $A$ in $L$.  An order $O$ in $H$ is an $A$-lattice, containing the identity element and closed under the multiplication in $H$. An embedding $i: L \rightarrow H$ is 
called optimal if $i(L)\cap O = i(B)$. If this is the case, then $O$ is naturally a projective $B$-of rank $2$, such that the norm $n$ takes values in $A$ and represents 1. 
Conversely, let $\Lambda \subset V$ be a lattice, i.e. a projective $B$-module of rank 2, such that $h$ is integral on $\Lambda$, i.e. 
$h(\Lambda) \subseteq A$. Such a pair $(\Lambda, h)$ is called an integral binary hermitian form. 
Assume that there exists $v\in \Lambda$ such that $h(v)=1$. We show that $\Lambda$ is an order (denoted by $ O_{\Lambda}$) in $H_V$, and thus we have 
constructed a quaternionic order and an embedding of $B$ into it, given by $b\mapsto bv$. Just as in the field case, different choices of  $v$ give isomorphic objects and, as a consequence, 
there is a bijection between isomorphism 
classes of embeddings of $B$ into quaternionic orders and non-degenerate integral binary hermitian forms  $(\Lambda,h)$ representing 1. 

We now turn to the question when a hermitian form represents 1, which is especially interesting in the integral case. To that end we establish a relationship 
between invariants  of the objects studied. The determinant $d(\Lambda, h)$ of an integral binary hermitian form $(\Lambda, h)$ is a fractional ideal in $K$ generated by 
$\det(s(v_i, v_j))$ for all pairs $(v_1,v_2)$ of elements of $\Lambda$.  
We note that $d(\Lambda, h)$ is not necessarily an integral ideal. Integrality of $h$ implies only that $s$ takes values in the different ideal of $B$ over $A$ in $L$. 
 Since the discriminant $D_{B/A}$ is the norm of the inverse different, the product $ D_{B/A} \cdot d(\Lambda, h)$ is an integral ideal. This was 
 previously observed in \cite{savin-bestvina} where the discriminant of an integral binary hermitian form is defined as this product. Thus we follow the same convention 
 and define the discriminant of  $(\Lambda,h)$  to be the integral ideal  $\Delta(\Lambda, h) =D_{B/A} \cdot d(\Lambda, h)$. If $h$ represents 1, so $\Lambda$ is 
 the order $O_{\Lambda}$, then we have an identity 
\[ 
\Delta( O_{\Lambda})= \Delta(\Lambda, h) 
\] 
of integral ideals where $\Delta( O_{\Lambda})$ is the discriminant of the order $O_{\Lambda}$. 
If $K=\mathbb Q$ and $L$ an imaginary quadratic extension, then these invariants can be refined to be integers, and this is an equality of two integers. 
The following is the most interesting result in this paper: 


\begin{thm}  Assume $L$ is a tamely ramified, imaginary quadratic extension of $\mathbb Q$. Let $B$ be the maximal order in $L$. 
Let $\Delta<0$ be a square free integer. 
There is a bijection between the isomorphism classes of embeddings of $B$ into quaternionic orders of discriminant $\Delta$ and integral binary 
$B$-hermitian forms of discriminant $\Delta$. 

\end{thm}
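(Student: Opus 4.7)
First, by the preceding integral bijection, isomorphism classes of integral binary $B$-hermitian forms $(\Lambda, h)$ that represent $1$ correspond in a discriminant-preserving way to isomorphism classes of embeddings of $B$ into quaternionic orders. The theorem therefore reduces to showing that under our hypotheses every integral binary $B$-hermitian form of discriminant $\Delta$ represents $1$.

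I would attack this by local analysis. Over $\mathbb{R}$, imaginary quadraticity of $L$ together with $\Delta<0$ forces $h$ to be positive definite. At each finite prime $p$, the square-freeness of $\Delta$ combined with the decomposition $\Delta = D_{B/\mathbb{Z}} \cdot d(\Lambda, h)$, together with the square-freeness of $D_{B/\mathbb{Z}}$ (a consequence of tame ramification), pins $\val_p(\Delta)$ to $\{0,1\}$. I would then split into cases by the splitting behavior of $p$ in $L$ (split, inert, ramified) and by the value of $\val_p(\Delta)$; in each case the local Gram matrix $\bigl(s(e_i,e_j)\bigr)$ on $\Lambda_p$ is rigid enough that, after a suitable change of basis of $\Lambda_p$ and invocation of surjectivity of the local norm map $N : B_p^\times \to \mathbb{Z}_p^\times$ in the unramified cases, or the explicit tame structure of the uniformizer in the ramified case, one can exhibit $v_p \in \Lambda_p$ with $h(v_p) = 1$.

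The principal obstacle will be promoting these local representations of $1$ to a single global element $v\in\Lambda$: since $H_V$ is definite, $H_V^\times$ does not satisfy strong approximation and naive local-to-global gluing is unavailable for integral representation by positive definite forms. I would bridge this gap by a counting argument. The discriminant-preserving injection from forms representing $1$ to embeddings is already in hand, so it suffices to verify that the two sides have equal cardinality for each square-free $\Delta$. On the embedding side, this cardinality follows from Eichler's formula for optimal embeddings of $B$ into Eichler orders of square-free discriminant $\Delta$, summed over the type class set of the definite quaternion algebra ramified at the appropriate primes of $\Delta$; on the hermitian side, a parallel class-number computation for integral binary $B$-hermitian forms of discriminant $\Delta$ yields the analogous count. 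Matching the two counts forces the injection to be a bijection, and hence every integral binary $B$-hermitian form of discriminant $\Delta$ must represent $1$, completing the proof.
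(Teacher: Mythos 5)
Your opening reduction --- via the integral correspondence (Corollary \ref{C:int_equiv}) and the discriminant identity (Theorem \ref{T:discr_rel}) --- to the claim that every integral binary $B$-hermitian form of discriminant $\Delta$ represents $1$ is exactly the paper's first step, and your finite-prime analysis (square-freeness pins $\val_p(\Delta)$ to $\{0,1\}$; surjectivity of the norm $B_p^{\times}\to\mathbb Z_p^{\times}$ when $p$ is unramified; the explicit tame uniformizer when $p\mid D_{B/\mathbb Z}$) is the same local computation the paper performs. The divergence is at the archimedean place and hence at the global step. The paper reads the hypothesis $\Delta<0$ as putting $h$ in the \emph{indefinite} case on $\Lambda\otimes\mathbb R$ --- this is asserted in the introduction and is precisely what the proof of the key lemma uses --- and then global representation of $1$ follows in one stroke from the integral Hasse principle for indefinite forms in at least four variables \cite[Ch.~9, Theorem~1.5]{cassels}: local representability of $1$ at every $p$ suffices, and no class numbers, masses, or type sets enter anywhere. (You arrived at positive definiteness by reading the stated sign conventions literally, with $d(\Lambda,h)>0$ for definite forms and $D_{B/\mathbb Z}<0$; there is indeed a sign-convention wrinkle in the paper at this point, but the argument the paper actually runs is the indefinite one, and indefiniteness is exactly the ingredient that makes it work.)

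Within your definite reading, the proposal has a genuine gap: the concluding counting step is a placeholder, not a proof. You assert that the number of isomorphism classes of embeddings of $B$ into orders of discriminant $\Delta$ (Eichler's optimal embedding numbers summed over the type set) equals the number of isomorphism classes of integral binary $B$-hermitian forms of discriminant $\Delta$, but you neither compute the hermitian-side count, nor cite a result supplying it, nor verify that the two agree. Since the surjectivity you need is \emph{equivalent} to the equality of these two (finite) counts, declaring that they match is assuming the conclusion. Even to set the comparison up you would have to establish finiteness of both class sets and check that the equivalence relations implicit in Eichler's formula (conjugation by unit groups or normalizers of fixed representative orders) agree with the paper's notion of isomorphism of pairs $(i,\mathcal O)$; none of this is addressed. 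By contrast, in the indefinite setting none of this bookkeeping is required: once the local statements you already sketched are in place, Cassels' theorem closes the argument immediately, which is the route the paper takes.
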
 

The key here is to show that an integral binary 
hermitian form $(\Lambda, h)$ of discriminant $\Delta$ represents 1. 
The condition $\Delta$ is square free assures that $h$ represents 1 on 
$\Lambda \otimes \mathbb Z_p$ for all $p$. The condition $\Delta <0$ implies that $h$ is indefinite on $\Lambda \otimes \mathbb R$. 
The theorem follows from the integral version of the Hasse-Minkowski theorem for indefinite 
quadratic forms in 4 variables.

\section{Hermitian Forms}\label{herm-form} 

Let $K$ be a commutative field of any characteristic. Let $L$ be a separable quadratic $K$-algebra. In particular, there exists an 
involution $l \mapsto \bar l$ on $L$ such that the set of fixed points is $K$. A vector space over $L$ is a free $L$-module. 
We go over some basic results about hermitian forms on vector spaces. 

\begin{definition}
 Let $V$ be a vector space over  $L$.  A \textbf{hermitian symmetric form} is a
function $s: V \times V \rightarrow L$ which is $L$-linear in the first
argument and $s(x,y) = \conj{s(y,x)}$.

\end{definition}

Let $h: V \rightarrow K$ be the function defined by $h(x)=s(x,x)$ for all $x\in V$. This is a quadratic form, if we consider $V$ a vector space over $K$. in particular, 
the function $b(x,y) = h(x + y) - h(x) - h(y)$ is $K$-bilinear. 
If we let $n_L(l) = l \conj{l}$, for $l\in L$, then  $h(lx) = n_L(l) h(x)$. 
These two facts are a sufficient characterization of hermitian forms for our
purposes, due to the

\begin{prop} \label{P:bilinear} Let $V$ be a vector space over $L$. 
Let $h: V \rightarrow K$ be a quadratic form, where $V$ is considered a vector space over $K$, satisfying $h(lx) = n_L(l) h(x)$
for all $l \in K$. Then there is a unique hermitian symmetric form $s(x,y)$ such that $h(x)
= s(x, x)$.
\end{prop}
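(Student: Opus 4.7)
The plan is to write down an explicit formula for $s$ in terms of $h$, from which all the required properties follow by direct verification. Fix any $\alpha \in L \setminus K$, so $\{1, \alpha\}$ is a $K$-basis of $L$; set $t = \alpha + \bar\alpha$ and $n = \alpha\bar\alpha = n_L(\alpha)$, and note that $\alpha - \bar\alpha \neq 0$ since $L/K$ is separable. The hypotheses on $h$ ensure that
\[
b(x,y) := h(x+y) - h(x) - h(y) \quad\mbox{and}\quad c(x,y) := h(x + \alpha y) - h(x) - n\,h(y)
\]
are both $K$-bilinear maps $V \times V \to K$. I will then set
\[
s(x,y) := \frac{\alpha\,b(x,y) - c(x,y)}{\alpha - \bar\alpha}.
\]
The motivation: if a hermitian $s$ with $s(x,x) = h(x)$ existed, then expanding $h(x+y)$ and $h(x+\alpha y)$ would force $b = s + \conj{s}$ and $c = \bar\alpha\, s + \alpha\, \conj{s}$, and inverting this $2 \times 2$ linear system over $L$ recovers the above formula (which simultaneously establishes uniqueness).

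Next I would dispatch the easy verifications. That $s(x,x) = h(x)$ follows from $b(x,x) = (n_L(2) - 2)\,h(x)$ and $c(x,x) = (n_L(1+\alpha) - 1 - n)\,h(x) = t\,h(x)$, with a short manipulation that works uniformly in both characteristics. For hermitian symmetry $s(y,x) = \conj{s(x,y)}$: both pairs $(s(x,y), s(y,x))$ and $(\conj{s(y,x)}, \conj{s(x,y)})$ satisfy the same linear system $u + v = b(x,y)$, $\bar\alpha u + \alpha v = c(x,y)$, where the latter equation, applied to the second pair, uses the auxiliary identity $c(x,y) + c(y,x) = t\,b(x,y)$; the system has determinant $\alpha - \bar\alpha \neq 0$, and hence the two pairs agree.

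The main obstacle is $L$-linearity of $s$ in its first argument. Since $b$ and $c$ are already $K$-bilinear, it suffices to check $s(\alpha x, y) = \alpha\,s(x,y)$, which, after unwinding the formula for $s$, reduces to the single functional equation
\[
h(\alpha x + y) + h(x + \alpha y) = t\,h(x+y) + (1 + n - t)\bigl(h(x) + h(y)\bigr).
\]
To establish this, I would put $u = \alpha x + y$ and $v = x + \alpha y$, observe that $u + v = (1+\alpha)(x+y)$ so $h(u+v) = (1+t+n)\,h(x+y)$, and then use $h(u) + h(v) = h(u+v) - b(u,v)$. Expanding $b(u,v)$ by $K$-bilinearity into four summands and simplifying via the elementary identities $b(x, \alpha x) = t\,h(x)$ (from $h((1+\alpha)x) = (1+t+n)\,h(x)$) and $b(\alpha x, \alpha y) = n\,b(x,y)$ (from $h(\alpha\,\cdot) = n\,h(\cdot)$) rearranges to give the required equation, completing the plan.
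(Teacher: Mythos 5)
Your argument is correct, and it shares its starting point with the paper's proof: both invert the same $2\times 2$ linear system to obtain the explicit formula (your $s$ is the paper's $s_l$ with $l=\bar\alpha$, since $c(x,y)=b(x,\alpha y)=b(\bar\alpha x,y)$), and uniqueness falls out of that inversion in both cases. Where you diverge is in how the formula is verified to be hermitian. The paper's key intermediate step is that the solution $s_l$ is independent of the auxiliary parameter $l$; linearity and symmetry are then obtained almost for free by evaluating the same quantity at a different parameter ($s(lx,y)=s_{\bar l}(lx,y)$, etc.), together with $b(lx,y)=b(x,\bar l y)$. You instead freeze a single $\alpha$ and do everything by direct polarization computations: linearity in the first argument reduces to $b(\alpha x,y)+b(x,\alpha y)=t\,b(x,y)$ (equivalently your functional equation, which does follow from your $u,v$ expansion, or more directly from $b((1+\alpha)x,(1+\alpha)y)=n_L(1+\alpha)\,b(x,y)$), and symmetry follows from uniqueness of solutions of the system via $c(x,y)+c(y,x)=t\,b(x,y)$; I checked these computations and they are sound, also in characteristic $2$. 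The paper's independence-of-$l$ trick is slicker; your route avoids that lemma at the cost of a longer but entirely elementary computation at one fixed $\alpha$. One small point to patch: Section 2 of the paper allows $L$ to be a split quadratic algebra $K\times K$, so to divide by $\alpha-\bar\alpha$ you need it to be invertible in $L$, not merely nonzero; this is true (it is exactly the parenthetical remark in the paper's proof), but ``nonzero because $L/K$ is separable'' does not by itself justify the division when $L$ is not a field.
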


\begin{proof}
Assume that $s$ exists. Then, for all $x, y \in V$ and $l \in L $,  
\begin{align*}
s(x,y) + s(y,x) & = b(x,y) \\
l s(x,y) + \conj{l } s(y,x) & = b(lx, y). 
\end{align*}
We view this as a $2\times 2$ system in unknowns $s(x,y)$ and $s(y,x)$. If $l\neq \bar l$ then this system has a unique solution given by 
\begin{equation}\label{herm-formula}
s_l(x,y) = \dfrac{\conj{l}b(x,y) - b(lx, y)}{\conj{l} - l}
\end{equation} 
where the subscript $l$ indicates that the solution, so far, depends on the choice of $l$. 
(Note that $\conj{l} - l$ is invertible, even when $L\cong K^2$.) We claim that $s_l(x,y)$ is independent of $l$. Indeed, let
$l = c + d \pi$, where $L=K[\pi]$, and $c,d\in K$. Then

\begin{align*}
s_l(x, y) & = \dfrac{\conj{l} b(x, y) - b(lx, y)}{\conj{l} - l} \\
& = \dfrac{c b(x, y) + d \conj{\pi} b(x, y) - b(cx + d \pi x, y)}{c + d
\conj{\pi} - c - d\pi} \\
& = \dfrac{d \conj{\pi} b(x, y) - d b(\pi x, y)}{d\conj{\pi} - d\pi}
\\
& = s_\pi(x, y).
\end{align*}
Hence $s_l = s_\pi$ and we can write $s$ for any of the $s_l$. It is then easy to check
that $s(x, x) = s_l(x, x) = h(x)$.

Now we check that $s(x,y)$ is linear in the first variable. It is clear that 
$s(x_1 + x_2, y) = s_l(x_1 + x_2, y)$. Let $l\in L \setminus K$. Then 
\begin{align*}
s(l x, y) & = s_{\conj{l}}(lx, y)  \\
& = \dfrac{l b(l x,y) - b(n_L(l) x, y)}{l - \conj
{l}} \\
& = \dfrac{l b(l x,y) - n_L(l) b(x, y)}{l- \conj{l}}
\\
& = l \dfrac{\conj{l} b(x, y) - b(lx,y)}{\conj{l} -l} \\
& = l s_l(x,y) = l s(x,y),
\end{align*}
which shows $s$ is linear in the first argument. 
It remains to check that $s(x, y) = \conj{s(y, x)}$. 
Note that $b(\conj{l}
x, \conj{l} y) = n_L(l) b(x, y)$. Replacing $x$ with $lx$, we find that $b(n_L(l)
x, \conj{l} y) = n_L(l) b(lx, y)$, hence $b(lx, y) = b(x, \conj{l} y)$. Then
\begin{align*}
s(y, x) = s_{\conj{l}}(y, x) & = \dfrac{l b(y, x) - b(\conj{l} y, x)}{l -\conj
{l}} \\
& = \dfrac{l b(x, y) - b(lx, y)}{l - \conj{l}} \\
& = \conj{s_l(x, y)} = \conj{s(x, y)},
\end{align*}
or $s(x, y) = \conj{s(y, x)}$, as desired. Uniqueness is clear. 
\end{proof}

Thus, we can make the following definition.

\begin{definition} Let $V$ be a vector space over $L$. 
A \textbf{hermitian form} is a quadratic form $h: V \rightarrow K$, where $V$ is a considered a vector space over $K$, such that $h(lv) = n_L(l) h(v)$ for all $l \in L$, and for all $v \in V$, 
\end{definition} 

\section{Quaternion algebras} 

Let $H$ be a quaternion algebra over $K$, i.e. a central simple algebra over $K$ of dimension $4$. 
We know that by general theory (e.g. \cite[\S 9a]{reiner}), every element $x\in H$ 
 satisfies a reduced characteristic polynomial of degree two,
associated to its action on the algebra via left-multiplication. The reduced trace $\tr(x)$ is
the sum of the roots of the reduced characteristic polynomial of $x$. Let the
reduced norm $n(x)$ be the product of the roots.
 The trace furnishes $H$ with an involution
$\conj{x} := \tr(x) - x$ such that $n(x)=x\conj{x}$.  Let $L$ be a separable quadratic field
extension of $K$ and $i: L \rightarrow H$ an embedding. Since $L$ is separable, the restriction of the involution of $H$ to $L$ is non-trivial. 
The norm $n$ is multiplicative, i.e. $n(xy)=n(x)n(y)$ for all $x,y\in H$. Hence  the norm is a hermitian
form on $H$. This hints at a correspondence between embeddings of $L$ into quaternion algebras 
and hermitian forms. The next section will formalize this idea. We finish this section with the following structural result.

\begin{prop}\label{quat-struct}
Let $L$ be a separable quadratic field
extension of $K$. If $i: L \rightarrow H$ is an embedding, then there exists $u \in H$ such that $ \bar u =-u$ and 
 $ \theta \in K^{\times}$ satisfying $H = L + Lu$, $u^2 = \theta$, and $um = \conj{m}u$
  for all $m \in L$ (cf. \cite[Definition 1.1, Chapter 1]{vigneras}). For every $v=x+yu \in  L + Lu$, 
  \[ 
  n(v)= n_L(x) - n_L(y)\theta, 
  \] 
  where $n_L: L \rightarrow K$ is the norm. 
\end{prop}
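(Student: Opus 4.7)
The plan is to invoke the Skolem--Noether theorem to produce an element $u$ implementing the nontrivial $K$-automorphism of $L$ by inner conjugation, and then to check each of the asserted properties. Consider $\sigma : L \to L$, $\sigma(m) = \conj{m}$, which is a $K$-algebra automorphism of $L$ regarded as a simple $K$-subalgebra of the central simple $K$-algebra $H$. Skolem--Noether supplies $u \in H^{\times}$ with $umu^{-1} = \conj{m}$, equivalently $um = \conj{m}\,u$, for every $m \in L$.

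Next I would derive the decomposition $H = L + Lu$ and verify that $\theta := u^2$ lies in $K^{\times}$. Because conjugation by any element of the commutative algebra $L$ acts trivially on $L$, while $\sigma$ is nontrivial, necessarily $u \notin L$, so $L \cap Lu = 0$; a dimension count then gives $H = L \oplus Lu$. Iterating $um = \conj{m}\,u$ shows $u^2 m = m u^2$ for all $m \in L$, so $u^2$ centralizes $L$. Since $u^2$ trivially commutes with $u$ and $H$ is generated over $K$ by $L$ and $u$, the element $u^2$ lies in the center $K$ of $H$, and is nonzero because $u$ is a unit. To obtain $\conj{u} = -u$, I would compute the reduced characteristic polynomial of $u$: since $u \notin K$ and $u^2 = \theta \in K$, the minimal polynomial of $u$ over $K$ equals $X^2 - \theta$, which must therefore also be the reduced characteristic polynomial, forcing $\tr(u) = 0$ and hence $\conj{u} = \tr(u) - u = -u$.

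For the norm formula, write $v = x + yu$ with $x, y \in L$. The anti-involution property on $H$ gives $\overline{yu} = \conj{u}\,\conj{y} = -u\conj{y}$, so $\conj{v} = \conj{x} - u\conj{y}$. Expanding $v\conj{v}$ and applying the rewrites $u\conj{x} = xu$ and $u\conj{y} = yu$ (instances of $um = \conj{m}\,u$), the cross terms $yu\conj{x} - xu\conj{y}$ collapse to $(yx - xy)u$, which vanishes by commutativity of $L$. The remaining terms give $n(v) = x\conj{x} - yu \cdot u\conj{y} = n_L(x) - \theta\, n_L(y)$, using centrality of $\theta$.

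The one subtlety, and the main place where care is needed, is the bookkeeping between two involutions in play: the $K$-linear anti-involution on $H$ defining the reduced trace and norm, and the Galois conjugation on $L$. The argument requires that these agree on $L$, which follows because, as observed in the preceding discussion, separability of $L/K$ forces the restriction of the $H$-involution to $L$ to be nontrivial, and $L/K$ admits only one nontrivial $K$-automorphism. Everything else is formal manipulation.
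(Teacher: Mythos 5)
Your proof is correct, but it follows a genuinely different route from the paper. You invoke Skolem--Noether to produce $u$ with $umu^{-1}=\conj{m}$, deduce $H=L\oplus Lu$ and the centrality of $u^2$ from the commutation relation, and get $\tr(u)=0$ (hence $\conj{u}=-u$) by identifying the reduced characteristic polynomial of $u$ with its minimal polynomial $X^2-\theta$; the norm formula then comes from expanding $v\conj{v}$ with the standard involution. The paper instead stays inside its hermitian-form framework: by Proposition \ref{P:bilinear} the reduced norm on $H$, viewed as a binary $L$-hermitian form via the embedding $i$, admits a unique hermitian symmetric form $s$, and $u$ is chosen as a nonzero vector with $s(u,1)=0$; the relations $\tr(u)=0$, $\tr(lu)=0$, hence $\conj{u}=-u$, $ul=\conj{l}u$ and $u^2=-n(u)\in K^\times$ are all read off from that single orthogonality condition, and the norm formula is just the orthogonality of $1$ and $u$. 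Your argument is the classical cyclic-algebra presentation, is self-contained modulo Skolem--Noether, and (like the paper's) works in all characteristics, including $2$; you were also right to flag that the canonical involution of $H$ restricts to the Galois conjugation on $L$, which the paper records just before the proposition. The paper's construction buys something you don't get for free: it exhibits $u$ as an explicit orthogonal complement of $1$ for $s$, which is exactly the decomposition $V=L\oplus L^{\perp}$ reused later when the quaternion structure $H_V$ is built from a pointed hermitian space, so the two proofs are not interchangeable in the larger architecture of the paper even though both establish the proposition.
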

\begin{proof} By Proposition \ref{P:bilinear}, there exists a unique, $L$-hermitian bilinear form 
$s(v,w) : V \times V \rightarrow L$ such that $s(v,v)=n(v)$ for all $v\in H$. 
 Take $u\neq 0$ with $s(u, 1) = 0$. Then $H=L + Lu$.  For any
$l \in L \setminus K$, \[
0 = s(u, 1) = s_l(u, 1) = \dfrac{\conj{l}b(u, 1) - b(lu, 1)}{\conj{l} - l}.
\] Since $b(u, 1) = n(u + 1) - n(u) - n(1) = \tr(u)$ and $b(lu, 1) = \tr(lu)$, hence
$\conj{l} \tr(u) = \tr(lu)$. If $\tr(u) \neq 0$, then $\conj{l} \in K$, a
contradiction. Hence $\tr(u) = 0$ and $\tr(lu) = 0$. The first of these implies
that $\conj{u} = -u$, and so $0 = \tr(lu) = lu + \conj{u} \conj{l} = lu - u\conj
{l}$, i.e. that $lu = u \conj{l}$, or $u l= \conj{l} u$. Finally, since $\tr(u) =
0$, $u^2 = -n(u) \in K$. Simplicity of $H$ forces $\theta$ to be non-zero. The formula for $n(v)$ follows from orthogonality of 1 and $u$. 
\end{proof}

Note that the hermitian symmetric form $s$, attached to $n$, in the basis $1,u$ is given by a diagonal matrix with $1$ and $-\theta$ on the diagonal. In particular, 
invertibility of $\theta$ implies that the hermitian form is non-degenerate.

\section{Correspondence Over a Field}

Here $L$ continues to be a separable quadratic extension of $K$.
The correspondence alluded to before 
Proposition \ref{quat-struct} can be formalized as an isomorphism of the
categories \Herm\textsubscript{$L$}\ of non-degenerate pointed hermitian spaces and
\Quat\textsubscript{$L$} of embeddings of $L$ into quaternion algebras over $K$. 
We will define these terms more precisely, and give a proof of the isomorphism.

\begin{definition}

Let \Quat\textsubscript{$L$} be the category whose objects are pairs $(i, H)$
where $H$ is a quaternion algebra over $K$, and $i : L \rightarrow H$ is an embedding of $L$ into $H$. 
The morphisms $f : (i, H) \rightarrow (i', H')$ are morphisms $f : H \rightarrow H'$ of $K$-algebras
satisfying $f \circ i = i'$. Note that any $f$ must be an isomorphism of $H$ with
$H'$.

\end{definition}

\begin{definition}

Let \Herm\textsubscript{$L$} be the category whose objects are triples $(V, v,
h)$, called \textbf{pointed hermitian spaces}, where $V$ is a
two-dimensional $L$-vector space, $h: V \rightarrow K$ a non-degenerate
hermitian form, and $v \in V$ such that $h(v) = 1$. The morphisms from $(V,
v, h)$ to $(V', v', h')$ are isomorphisms $f: V \rightarrow V'$ with $f(v) = v'$
and $h(x) = h'(f(x))$.

\end{definition}

We can now define a functor $F:$ \Quat\textsubscript{$L$} $\rightarrow$
\Herm\textsubscript{$L$}, due to the following easy lemma: 
\begin{lem}
Let $F:$ \Quat\textsubscript{$L$} $\rightarrow$ \Herm\textsubscript{$L$} be
given by $F(i, H) = (H, 1, n)$, where $n$ is the norm form of $H$, and $1$
is the identity element of $H$. Given a morphism $f: (i, H) \rightarrow (i', H')$,
define $F(f) := f$. Then $F$ is a functor.
\end{lem}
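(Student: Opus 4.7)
The plan is to unpack the two definitions and verify three items: (i) that for each object $(i,H)$ in $\Quat_L$, the triple $(H,1,n)$ really is an object of $\Herm_L$; (ii) that for each morphism $f\colon (i,H)\to(i',H')$ in $\Quat_L$, the same map $f$ satisfies the three conditions required to be a morphism in $\Herm_L$; (iii) that identities and composition are preserved, which is automatic because $F(f)=f$ as a set-map.

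For (i), the embedding $i$ endows $H$ with the structure of an $L$-module. Since $\dim_K H=4$ and $\dim_K L=2$, and since Proposition \ref{quat-struct} writes $H=L+Lu$, the $L$-module $H$ is free of rank $2$. That $n$ is a quadratic form on the underlying $K$-vector space is standard (coming from the canonical involution $\bar x=\tr(x)-x$ with $n(x)=x\bar x$). Multiplicativity of $n$ gives $n(i(l)x)=n(i(l))n(x)=n_L(l)n(x)$, so $n$ is a hermitian form in the sense of Section \ref{herm-form}. Non-degeneracy is exactly the remark recorded after Proposition \ref{quat-struct}: in the basis $1,u$ the associated symmetric form has matrix $\mathrm{diag}(1,-\theta)$ with $\theta\in K^\times$. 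Finally $n(1)=1$, so $1\in H$ is a valid distinguished point.

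For (ii), let $f\colon H\to H'$ be a $K$-algebra homomorphism with $f\circ i=i'$; it is automatically an isomorphism, as noted in the definition of $\Quat_L$. We need $f(1)=1'$, $L$-linearity, and $n(x)=n'(f(x))$. The first holds because $f$ is a unital $K$-algebra map. For $L$-linearity, the action of $l\in L$ on $H$ (resp.\ $H'$) is given by left multiplication by $i(l)$ (resp.\ $i'(l)$), so
\[
 f(l\cdot x)=f(i(l)x)=f(i(l))f(x)=i'(l)f(x)=l\cdot f(x).
\]
Preservation of the norm uses that the reduced norm is intrinsic to the $K$-algebra structure: $n(x)$ equals the constant term (up to sign) of the reduced characteristic polynomial of $x$, and any $K$-algebra isomorphism $f$ carries the reduced characteristic polynomial of $x$ to that of $f(x)$. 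Hence $n(x)=n'(f(x))$.

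The main obstacle, such as it is, is this last point about preservation of the reduced norm; but once one recalls that the reduced characteristic polynomial is defined purely in terms of the $K$-algebra structure (e.g.\ via left multiplication, as cited from \cite[\S 9a]{reiner}), the conclusion is automatic. All of (i)--(iii) are then in place, so $F$ is a functor.
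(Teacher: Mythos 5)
Your verification is correct and uses exactly the ingredients the paper intends: Proposition \ref{quat-struct} (and the remark following it) for the rank-two $L$-structure and non-degeneracy, multiplicativity of the reduced norm together with $n\circ i=n_L$ for the hermitian property, and the intrinsic nature of the reduced characteristic polynomial for norm preservation under $K$-algebra isomorphisms. The paper states this lemma without proof as an easy check, and your argument is precisely that routine verification, so there is nothing to add.
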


Now we construct a functor $G:$ \Herm\textsubscript{$L$} $\rightarrow$
\Quat\textsubscript{$L$} by defining a multiplication $\cdot$ on a given $(V, v, h)$,
motivated by Proposition~\ref{quat-struct}. Recall that there is a unique
hermitian symmetric form $s (v,w)$ on $V$ with $h(x) = s(x, x)$ for all $x\in V$.   
Embed $L$ into $V$ via $l \mapsto lv$ where $lv$ denotes the scalar multiplication. 
\begin{enumerate}
\item $v$ is the identity element, 
\item for $x \in V$,  and $l\in L$, $l \cdot x := lx$, the scalar multiplication, 
\item for $y \in L^\perp = \{ w \ | \ s(v, w) = 0\}$, $y \cdot (lv) := \conj{l}y$,
for $l \in L$.
\item for $y \in L^\perp$, $y \cdot y := -h(y)v$.
\item conjugation on $V$ is $\conj{lv+y}=\conj{l}v -y$,  for $l\in L$ and $y \in L^\perp$,
\end{enumerate}
 This gives $V = L \oplus L^\perp$ a structure of quaternion algebra, as in Proposition \ref{quat-struct}, 
 since we can choose $u$ to be any element of $L^\perp$, and $\theta = -h(u)$.
  Non-degeneracy of $s$ implies that $\theta\neq 0$. The norm form is $h$. 
Let $H_V$ denote this algebra. Then we have  

\begin{lem}
Define $G(V, v, h) := (i_v, H_V)$, where $i_v(l)=lv$ for $l\in L$, and for a morphism $f: (V,
v, h) \rightarrow (V', v', h')$, define $G(f) := f$. Then $G$ is a functor from 
\Herm\textsubscript{$L$} to \Quat\textsubscript{$L$}.
\end{lem}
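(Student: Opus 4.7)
The plan is to verify three items: first, that $G(V,v,h) = (i_v, H_V)$ is a well-defined object of \Quat\textsubscript{$L$}; second, that $G$ sends morphisms of \Herm\textsubscript{$L$} to morphisms of \Quat\textsubscript{$L$}; third, that composition and identities are respected.

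For the first item, I would begin by noting that $s(v,v)=h(v)=1\neq 0$, so $v$ is anisotropic and non-degeneracy of $s$ yields the orthogonal decomposition $V=Lv\oplus L^\perp$ with $L^\perp$ one-dimensional over $L$. Choose any nonzero $u\in L^\perp$; non-degeneracy forces $h(u)=s(u,u)\neq 0$, since otherwise $s$ would vanish identically on the summand $Lu$. Setting $\theta=-h(u)$, the rules (1)--(4) extended bilinearly match exactly the quaternion algebra presentation of Proposition~\ref{quat-struct}: rule (4) gives $u^2=\theta$, rule (3) gives $um=\conj{m}u$ for $m\in L$, and the norm formula from that proposition recovers $h$ via $h(xv+yu)=n_L(x)h(v)+n_L(y)h(u)=n_L(x)-n_L(y)\theta$ by orthogonality. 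Finally, $i_v(l)=lv$ is a $K$-algebra embedding, since it sends $1$ to the identity $v$ and $(l_1 l_2)v=l_1\cdot(l_2 v)=(l_1 v)\cdot(l_2 v)$ by rule (2).

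For the second item, let $f:(V,v,h)\to(V',v',h')$ be a morphism of pointed hermitian spaces. The relation $f\circ i_v=i_{v'}$ is immediate from $L$-linearity of $f$ and $f(v)=v'$. To show that $f$ is a $K$-algebra homomorphism $H_V\to H_{V'}$, the crucial observation is that by the uniqueness clause in Proposition~\ref{P:bilinear}, the form $s$ is determined by $h$, so $f$ automatically preserves $s$ as well. Combined with $f(v)=v'$, this forces $f(L^\perp)\subseteq (L')^\perp$. Preservation of multiplication then reduces to checking each defining rule: rule (2) follows from $L$-linearity; rule (3) gives $f(y\cdot lv)=f(\conj{l}y)=\conj{l}f(y)=f(y)\cdot f(lv)$ for $y\in L^\perp$; and rule (4) gives $f(y\cdot y)=-h(y)v'=-h'(f(y))v'=f(y)\cdot f(y)$. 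Bilinear extension covers all remaining products.

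The third item is automatic since $G$ is the identity on the underlying sets of morphisms, so $G(\mathrm{id})=\mathrm{id}$ and $G(g\circ f)=g\circ f=G(g)\circ G(f)$ hold by definition. I expect the only real content to be the observation in the second item that a morphism of pointed hermitian spaces, defined only as preserving $h$, automatically preserves $s$ by virtue of Proposition~\ref{P:bilinear}; without this uniqueness, a map respecting $h$ need not respect the orthogonal decomposition $V=Lv\oplus L^\perp$, and the multiplication rules would fail to transport cleanly along $f$.
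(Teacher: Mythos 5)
Your verification is correct, and it follows exactly the construction the paper sets up before the lemma (the paper itself leaves the check implicit): the multiplication rules are matched to the presentation of Proposition~\ref{quat-struct} with $\theta=-h(u)\neq 0$, and your key observation that a morphism preserving $h$ automatically preserves $s$ by the uniqueness clause of Proposition~\ref{P:bilinear} is precisely the point that makes the morphism-level check go through.
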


It is clear that the functors $F$ and $G$ are inverses of each other. Thus, we have

\begin{thm}
The categories \Quat\textsubscript{$L$} and 
\Herm\textsubscript{$L$} are isomorphic. 
\end{thm}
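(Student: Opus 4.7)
The plan is to show that $F$ and $G$ are mutually inverse functors. Since both send a morphism to the same underlying $K$-linear map, checking morphism compatibility reduces to verifying that a pointed hermitian isomorphism (an $L$-linear bijection preserving $v$ and $h$) is automatically a $K$-algebra isomorphism for the structures produced by $G$, and conversely. This will follow once we know $F \circ G$ and $G \circ F$ act as the identity on objects, which is the main content.

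For $F \circ G$, start with a pointed hermitian space $(V,v,h)$. The functor $G$ endows $V$ with the quaternion multiplication given by rules (1)--(5), using the decomposition $V = Lv \oplus L^\perp$; choosing any nonzero $u \in L^\perp$ puts us in the situation of Proposition \ref{quat-struct} with $\theta = -h(u) \in K^\times$ (nonzero by non-degeneracy of $s$). Applying $F$ then returns $(H_V, 1_{H_V}, n_{H_V}) = (V, v, n_{H_V})$, so the only thing to check is that the reduced norm of $H_V$ coincides with $h$. For $x, y \in L$, Proposition \ref{quat-struct} gives
\[ n_{H_V}(xv + yu) = n_L(x) - n_L(y)\theta = n_L(x) + n_L(y) h(u) = h(xv) + h(yu), \]
and this equals $h(xv + yu)$ by $s$-orthogonality of $v$ and $yu$.

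For $G \circ F$, start with $(i,H)$; then $F(i,H) = (H, 1, n)$, and $G$ produces a possibly new quaternion structure on $H$ which we must identify with the original. Proposition \ref{quat-struct} supplies $u \in H$ with $\bar u = -u$, $u^2 = \theta \in K^\times$, and $u \cdot i(l) = i(\bar l) u$. The key point is that $u$ lies in $L^\perp$ with respect to the $s$ attached to $n$: the argument in that proof gives $\tr(u) = 0$ and $\tr(lu)=0$ for all $l\in L$, hence $b(1,u) = 0 = b(l,u)$, and so $s(1,u) = 0$ by formula \eqref{herm-formula}. Consequently the $s$-orthogonal decomposition used by $G$ matches the one in Proposition \ref{quat-struct}, and rules (3)--(4) --- reading $u \cdot (lv) = \bar l u$ and $u \cdot u = -h(u) v$ --- coincide with the identities $u \cdot i(l) = i(\bar l) u$ and $u^2 = -n(u) = \theta$ holding in $H$. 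Rules (1), (2), (5) reproduce the $L$-action and involution, so the multiplication reconstructed by $G$ agrees with the original one.

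The main obstacle is precisely the identification $u \in L^\perp$ in the previous paragraph, which is what ties the abstract reconstruction performed by $G$ back to the structural decomposition guaranteed by Proposition \ref{quat-struct}. Once this is settled, the morphism statement is automatic: an isomorphism $f\colon V \to V'$ of pointed hermitian spaces preserves $h$ and hence $s$ (by uniqueness in Proposition \ref{P:bilinear}), so it preserves $L^\perp$; since the multiplication on $H_V$ is built entirely from the $L$-action, $s$, $v$, and $h$, the map $f$ is automatically a $K$-algebra isomorphism $H_V \to H_{V'}$ compatible with the embeddings of $L$. The converse direction is even simpler, as a $K$-algebra isomorphism $H \to H'$ intertwining $i$ and $i'$ automatically preserves the norm and sends $1$ to $1$. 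Together with the object-level computations, this yields the desired isomorphism of categories.
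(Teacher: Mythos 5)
Your proposal is correct and takes essentially the same route as the paper: the paper defines $F$ and $G$ and simply asserts that they are mutually inverse, while you supply the verification it leaves implicit (that the reduced norm of $H_V$ is $h$, and that the element $u$ of Proposition \ref{quat-struct} lies in $L^\perp$, so the reconstructed multiplication agrees with the original). No gaps worth noting.
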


Let $(V,v, h)$ be a pointed binary hermitian space and $H_V$ the quaternion algebra with the identity element $v$ arising from it. 
Let $u\in V$ be another element in 
$V$ such that $h(u)=1$. Then $v,u$, understood as elements in $H_V$,  satisfy $v\cdot u=u$. Thus, the right multiplication by $u$ gives an isomorphism of the
pointed symmetric spaces $(V,v,h)$ and $(V, u, h)$.  This shows that the isomorphism classes of objects in the category \Herm\textsubscript{$L$}  do not depend 
on the choice of the point representing 1.  Thus we have the following:

\begin{cor} \label{C:field_equiv} 
The functors $F$ and $G$ give a bijection between the isomorphism classes of embeddings of $L$ into quaternion algebras and the isomorphism classes 
of non-degenerate binary hermitian spaces $(V,h)$ such that $h$ represents 1. 
\end{cor}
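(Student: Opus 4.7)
The plan is to combine the isomorphism of categories $F, G$ between \Quat\textsubscript{$L$} and \Herm\textsubscript{$L$}, established in the preceding theorem, with the basepoint-change observation stated in the paragraph immediately before the corollary. Since isomorphism classes of embeddings of $L$ into quaternion algebras are by definition isomorphism classes in \Quat\textsubscript{$L$}, that theorem already gives a bijection between these and isomorphism classes of \emph{pointed} non-degenerate binary hermitian spaces $(V,v,h)$. It therefore suffices to show that the forgetful operation $(V,v,h) \mapsto (V,h)$ induces a bijection from isomorphism classes in \Herm\textsubscript{$L$} onto isomorphism classes of non-degenerate binary hermitian spaces $(V,h)$ in which $h$ represents $1$.

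Surjectivity of this forgetful operation is immediate: any such $(V,h)$ admits some $v \in V$ with $h(v) = 1$, yielding an object $(V,v,h)$ of \Herm\textsubscript{$L$}. For injectivity, suppose $(V,v,h)$ and $(V',v',h')$ become isomorphic after forgetting basepoints, via an $L$-linear bijection $f : V \to V'$ with $h' \circ f = h$. Then $h'(f(v)) = h(v) = 1$, so $f$ is an isomorphism $(V,v,h) \to (V', f(v), h')$ in \Herm\textsubscript{$L$}. Next I would invoke the observation preceding the corollary, but applied inside the quaternion algebra $H_{V'}$ whose identity element is $v'$: since $f(v) \in V'$ has $h'(f(v)) = 1$, right multiplication by $f(v)$ in $H_{V'}$ sends $v'$ to $f(v)$ and preserves $h'$ by multiplicativity of the norm, so it gives an isomorphism $(V',v',h') \to (V', f(v), h')$ in \Herm\textsubscript{$L$}. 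Composing these two isomorphisms yields $(V,v,h) \cong (V',v',h')$ in \Herm\textsubscript{$L$}, which is exactly what injectivity requires.

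There is no real obstacle beyond bookkeeping: once the category isomorphism and the right-multiplication trick for changing the distinguished point are in hand, the corollary follows formally. The only subtlety worth flagging is that the quaternion structure $H_{V'}$ used in the last step must be the one determined by $v'$ as the identity, so that $f(v)$ is a genuine norm-$1$ unit in that particular algebra; with this convention the multiplicativity $h'(x \cdot f(v)) = h'(x)h'(f(v)) = h'(x)$ does the work.
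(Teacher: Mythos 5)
Your proposal is correct and follows essentially the same route as the paper: the paper likewise combines the isomorphism of categories with the observation that right multiplication by a second norm-one element $u$ in $H_V$ gives an isomorphism of pointed spaces $(V,v,h)\cong(V,u,h)$, so the isomorphism class is independent of the chosen basepoint. Your write-up merely makes the resulting surjectivity/injectivity bookkeeping of the forgetful map explicit.
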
 

\section{Correspondence Over a Dedekind Domain}

Assume that $K$ is a field of fractions of a 
 Dedekind domain $A$. Let $B$ be the integral closure of $A$ in $L$. 
 It turns out an 
integral version of the above isomorphism can be established, replacing the field
with a Dedekind domain, quaternion algebras with orders, and binary hermitian
forms on a vector space with integral binary hermitian forms on a $B$-projective
module of rank 2. Let $i: L \rightarrow H$, be an embedding of $L$ into $H$. 
Recall that a lattice $\Lambda$ in the quaternion algebra $H$ is a finitely generated $A$-submodule containing 
a $K$-basis of $H$.  If the lattice $\Lambda$ is a $B$-module, then it is also projective of rank 2. 
An order in $H$  is a lattice containing the identity element and closed under the multiplication.

\begin{definition}

Let \Ord\textsubscript{$B$} be the category whose objects are pairs $(i,
\mathcal{O})$ where $i : B \rightarrow \roi{}$ is an embedding into an order
$\roi{}$ of quaternion algebra. The morphisms from  $(i, \mathcal{O})$ to 
$ (i', \mathcal{O}')$  are isomorphisms $f : \mathcal{O}
\rightarrow \mathcal{O}'$ of $A$-algebras satisfying $f \circ i = i'$.

\end{definition}

\begin{definition} A binary $B$-hermitian form is a pair $(\Lambda, h)$ where $\Lambda$ is a 
projective $B$-module of rank two, and $h: \Lambda \otimes_A
K \rightarrow K$ is an $L$-hermitian form. The form is integral if $h(\Lambda) \subset A$. 

\end{definition}

In the above definition we have identified $\Lambda$ with $\Lambda\otimes 1 \subset V = \Lambda \otimes_A K$.  

\begin{definition}

Let \Proj\textsubscript{$B$} be the category whose objects are triples $(\Lambda,v, h)$, 
called \textbf{pointed integral hermitian spaces}, where $(\Lambda, h)$ is an integral, binary $B$-hermitian form, with $h$ non-degenerate, 
 and $v \in \Lambda$ with $h(v) = 1$. The morphisms from $
(\Lambda, v, h)$ to $ (\Lambda', v', h')$ are $B$-module isomorphisms $f: \Lambda
\rightarrow \Lambda'$ preserving the pointed hermitian structure, i.e. for $x \in
\Lambda$, $h(x) = h'(f(x))$ and $f(v) = v'$.

\end{definition}

We will prove that the two categories are isomorphic. In one direction we have 
a functor given by the following: 

\begin{lem}
Let $P:$ \Ord\textsubscript{$B$} $\rightarrow$ \Proj\textsubscript
{$B$} be defined by $P(i, \mathcal{O}) := (\mathcal{O}, 1, n)$, where $n$ is
the norm of the quaternion algebra $\roi{} \otimes_A K$, and for a morphism
$f: (i, \mathcal{O}) \rightarrow (i', \mathcal{O}')$, $P(f) := f$. Then $P$ is a
functor between these two categories.
\end{lem}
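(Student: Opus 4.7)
The plan is to verify, in turn, that $P$ is well defined on objects, well defined on morphisms, and that the resulting assignment respects identities and composition. The last point is immediate, since on arrows $P$ is literally the identity map and on objects the data $(\mathcal{O}, 1, n)$ depends only on $(i,\mathcal{O})$; so I focus on the first two.

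For objects, given $(i,\mathcal{O}) \in \Ord_B$, the embedding $i$ equips $\mathcal{O}$ with a $B$-module structure because $i(B) \subset \mathcal{O}$ and $\mathcal{O}$ is closed under multiplication. First I would verify that this $B$-module is projective of rank $2$: $\mathcal{O}$ is a finitely generated $A$-lattice spanning $H$, hence finitely generated over $B$, and torsion free since it sits inside the two-dimensional $L$-vector space $H$; finitely generated torsion-free modules over the Dedekind domain $B$ are projective, and tensoring with $K$ recovers $H$, forcing the rank to be $2$. The quaternionic norm $n$ restricts to a hermitian form on $H = \mathcal{O}\otimes_A K$ by the discussion preceding Proposition~\ref{quat-struct}, and is non-degenerate by the remark following that proposition. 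Clearly $1 \in \mathcal{O}$ with $n(1)=1$.

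The one substantive point is integrality $n(\mathcal{O})\subseteq A$. For $x\in \mathcal{O}$, the $A$-submodule $A[x]\subseteq \mathcal{O}$ is finitely generated over the Noetherian ring $A$, so $x$ is integral over $A$; equivalently, its reduced characteristic polynomial has coefficients in $A$. In particular the constant term $n(x)$ lies in $A$, so $(\mathcal{O}, n)$ is indeed an integral binary $B$-hermitian form. Combined with the preceding paragraph, this shows $(\mathcal{O},1,n)\in \Proj_B$.

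For morphisms, given $f:(i,\mathcal{O}) \to (i',\mathcal{O}')$ in $\Ord_B$, I need $P(f)=f$ to be a $B$-module isomorphism preserving the pointed hermitian structure. $B$-linearity follows from the $A$-algebra identity $f(i(b)x)=f(i(b))f(x)=i'(b)f(x)$, using $f\circ i=i'$. Any $A$-algebra morphism preserves the identity element, so $f(1)=1$. Finally, since the reduced norm is defined purely in terms of the algebra structure — through left multiplication and the associated characteristic polynomial — an $A$-algebra isomorphism automatically satisfies $n'(f(x))=n(x)$, so the hermitian form is preserved.

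The whole verification is essentially bookkeeping; the only step requiring care, and the one I would regard as the main obstacle, is confirming that the $B$-module structure on $\mathcal{O}$ is projective of rank $2$ and that $n$ takes integer values on $\mathcal{O}$, since neither property is built into the definition of an order but both are needed to land in $\Proj_B$.
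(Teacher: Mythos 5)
Your verification is correct, and it matches what the paper intends: the paper states this lemma without proof, treating it as routine (the projectivity of a $B$-stable lattice is asserted in the section preamble, and integrality of the reduced norm on an order is the standard fact you invoke, valid because the Dedekind domain $A$ is integrally closed so integral elements have reduced characteristic polynomial in $A[X]$). You correctly identify and supply the only substantive checks — projective rank $2$, non-degeneracy, and $n(\mathcal{O})\subseteq A$ — so there is nothing to add.
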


In order to define a functor $Q:$ \Proj\textsubscript{$B$} $\rightarrow$
\Ord\textsubscript{$B$}, we need to define a multiplication on a pointed
$B$-hermitian module $(\Lambda, v, h)$. On $V = \Lambda \otimes_A K$, as in the previous section, we have a 
quaternion algebra structure  $H_V$ arising from $h$ and the 
associated $L$-hermitian bilinear form $s: V \times V \rightarrow L$, such that $v$ is the identity. 
We need the following: 

\begin{lem}
The $B$-module $\Lambda$, regarded as a
subset of $H_V$ (recall $V = H_V$ as sets), is closed under the multiplication on
$H_V$ i.e. it is an order in $H_V$ denoted by $\mathcal O_{\Lambda}$ 
\end{lem}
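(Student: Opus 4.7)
The plan is to reduce closure under multiplication to a local computation at each prime $\mathfrak{p}$ of $A$, where the condition $h(v)=1$ can be exploited to produce an explicit $B_\mathfrak{p}$-basis of $\Lambda_\mathfrak{p}$ containing $v$. Since $\Lambda$ is projective of rank $2$ over $B$ and equals $\bigcap_\mathfrak{p} \Lambda_\mathfrak{p}$ inside $V$, it suffices to verify $\Lambda_\mathfrak{p} \cdot \Lambda_\mathfrak{p} \subseteq \Lambda_\mathfrak{p}$ at every $\mathfrak{p}$. At such a prime $B_\mathfrak{p}$ is a semi-local Dedekind domain, hence a PID, so $\Lambda_\mathfrak{p}$ is free of rank $2$. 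To extend $v$ to a $B_\mathfrak{p}$-basis, write $v = d v'$ with $v' \in \Lambda_\mathfrak{p}$ primitive and $d \in B_\mathfrak{p}$; then $1 = h(v) = d\bar d\, h(v')$ with $h(v') \in A_\mathfrak{p}$ forces $d\bar d$ (hence $d$) to be a unit, so $\Lambda_\mathfrak{p} = B_\mathfrak{p} v \oplus B_\mathfrak{p} u$ for some $u \in \Lambda_\mathfrak{p}$.

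Given such a basis, closure reduces to checking that the four products $v^2, vu, uv, u^2$ lie in $\Lambda_\mathfrak{p}$ and that both left and right $B_\mathfrak{p}$-multiplication preserve $\Lambda_\mathfrak{p}$. The first three equal $v, u, u$ because $v$ is the identity of $H_V$. For $u^2$ I would use the reduced characteristic polynomial
\[
u^2 \;=\; \tr(u)\, u \;-\; n(u)\, v,
\]
where $n(u) = h(u) \in A_\mathfrak{p}$ by integrality of $h$, and $\tr(u) = b(u, v) = h(u+v) - h(u) - h(v) \in A_\mathfrak{p}$ by the same. Left multiplication by $\beta v$ for $\beta \in B_\mathfrak{p}$ is just the scalar action of $\beta$ on $\Lambda_\mathfrak{p}$, so it preserves $\Lambda_\mathfrak{p}$ tautologically.

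The main obstacle is right multiplication by elements of $B_\mathfrak{p}$, since such elements need not commute with $u$ in $H_V$. I would handle this by first observing that $\Lambda_\mathfrak{p}$ is closed under conjugation: $\bar x = \tr(x)\, v - x$ and $\tr(x) = b(x, v) \in A_\mathfrak{p}$. Then, for $x \in \Lambda_\mathfrak{p}$ and $\beta \in B_\mathfrak{p}$, I would write
\[
x \cdot \beta v \;=\; \tr(x \cdot \beta v)\, v \;-\; \overline{x \cdot \beta v} \;=\; \tr(x \cdot \beta v)\, v \;-\; \bar\beta\, \bar x,
\]
using the identity $\overline{xy} = \bar y \bar x$ and $\bar v = v$. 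The trace $\tr(x \cdot \beta v) = b(x, \bar\beta v)$ lies in $A_\mathfrak{p}$, while $\bar\beta\, \bar x \in \Lambda_\mathfrak{p}$ by $B_\mathfrak{p}$-linearity, so $x \cdot \beta v \in \Lambda_\mathfrak{p}$. Combining these facts gives $\Lambda_\mathfrak{p} \cdot \Lambda_\mathfrak{p} \subseteq \Lambda_\mathfrak{p}$, and patching over $\mathfrak{p}$ completes the proof.
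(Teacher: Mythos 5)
Your proof is correct in substance, but it follows a genuinely different route from the paper's. The paper also localizes and extends $v$ to a $B_\mathfrak{p}$-basis $(v,w)$, but then works with the hermitian symmetric form $s$: it replaces $w$ by $v^\perp = w - s(w,v)v$ and verifies closure by one explicit computation, $w\cdot w = (\theta - n(\gamma))v + \tr(\gamma)w$ with $\gamma = s(w,v)$ and $\theta = -h(v^\perp)$, identifying the coefficients as $-h(w)$ and $h(v+w)-h(v)-h(w)$. You avoid $s$ and the orthogonal complement entirely: you extract integrality of the structure constants from the reduced characteristic polynomial $u^2=\tr(u)u-n(u)v$ and from stability of $\Lambda_\mathfrak{p}$ under conjugation ($\bar x = \tr(x)v-x$), and you tame the noncommutativity through the identity $x\cdot\beta v = \tr(x\beta v)v-\bar\beta\bar x$. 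This is arguably cleaner, and your remark that $1=n_L(d)h(v')$ forces $d$ to be a unit actually proves the primitivity of $v$, which the paper only asserts. One step should be made explicit: your list of checks ($v^2$, $vu$, $uv$, $u^2$, and left/right multiplication by $B_\mathfrak{p}$) does not literally cover the mixed term $(\beta u)\cdot(\beta' u)$ with $\beta,\beta'\in B_\mathfrak{p}$, which is neither a product of basis vectors nor a right multiplication by an element of $B_\mathfrak{p}v$. It does follow from what you have established, via associativity: $(\beta u)\cdot(\beta' u)=\beta\bigl((u\cdot\beta' v)\cdot u\bigr)$, and writing $u\cdot\beta' v=\gamma_1 v+\gamma_2 u$ with $\gamma_i\in B_\mathfrak{p}$ (possible since you showed $u\cdot\beta' v\in\Lambda_\mathfrak{p}$) gives $(u\cdot\beta' v)\cdot u=\gamma_1 u+\gamma_2 u^2\in\Lambda_\mathfrak{p}$. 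With that sentence added your argument is complete; note that the paper's own reduction (``we just need to check $w\cdot w$'') elides the analogous cross terms as well.
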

\begin{proof}
We first reduce to the case of $A$ and $B$ principal ideal domains. 
Let $\prim$ be any prime of $A$, $S_\prim = A - \prim$, and $A_\prim = S_\prim^{-1} A$,
$B_\prim = S_\prim^{-1} B$ the localizations of $A$ and $B$ respectively. 
 Then $B_\prim$ is a semi-local Dedekind domain, hence it is a principal ideal domain. 
Let $\Lambda_\prim = S_\prim^{-1} \Lambda$. If we show, for $x,
y \in \Lambda$ , that $x \cdot y \in \Lambda_\prim$, for all $\prim$, then using $\Lambda =
\bigcap_\prim \Lambda_\prim$, we  conclude that $x \cdot y \in \Lambda$. 

Hence we can assume that $B$ is a principal ideal domain. Then $\Lambda$ is a free $B$-module of rank $2$. Note that
$v$ is a primitive element of the lattice $\Lambda$ since $h(v) = 1$. Hence $v$  is  member of a $B$-basis $(v,w)$ of $\Lambda$ 
 \cite[Ch. 7, Theorem 3.1, page 106]{cassels}.  Let  $v^\perp = w - s(w, v) v$, then $s(v, v^\perp)= 0$.
Let $\gamma = s(w, v)$ and $\theta = v^\perp \cdot v^\perp = -h(v^\perp)$.
Then to check $x \cdot y \in \Lambda$, we just need to check that
\begin{align*}
w \cdot w = (v^\perp + \gamma v)\cdot (v^\perp + \gamma v)
\end{align*}
is an element of $\Lambda_\prim$. But we can compute
\begin{align*}
w \cdot w & = (v^\perp + \gamma \cdot v) \cdot (v^\perp + \gamma \cdot v) \\
& = v^\perp \cdot v^\perp + v^\perp \cdot \gamma \cdot v + \gamma \cdot v \cdot
v^\perp + \gamma \cdot v \cdot \gamma \cdot v \\
& = \theta \cdot v + \conj{\gamma} \cdot v^\perp + \gamma \cdot v^\perp +
\gamma^2 \cdot v \\
& = (\theta + \gamma^2) v + (\gamma + \conj{\gamma}) v^\perp \\
& = (\theta + \gamma^2 - \tr(\gamma)\gamma) v + \tr(\gamma) w \\
& = (\theta - n(\gamma)) v + \tr(\gamma) w
\end{align*}
Now note that $n(\gamma) = \gamma \conj{\gamma}= h(\gamma \cdot v)$. Hence,
\begin{align*}
-\theta = s(v^\perp, v^\perp) & = s(w, w) + s(-\gamma v, w) + s(w, -\gamma v) + 
s(-\gamma v, -\gamma v) \\
& = h(w) + h(\gamma v) - \gamma s(v, w) - \conj{\gamma} s(w, v) \\
& = h(w) + n(\gamma) - 2 n(\gamma) = h(w) - n(\gamma),
\end{align*}
since $s(v,w) = \conj{\gamma}$. Hence $\theta - n(\gamma) = -h(w)$,
which is in $A$. Finally,
\begin{align*}
\tr(\gamma) = \gamma + \conj{\gamma} & = s(w, v) + s(v, w) \\
& = s(v + w, v + w) - s(v, v) - s(w, w) \\
& = h(v + w) - h(v) - h(w),
\end{align*}
which is also in $A$. Thus, the product from $V$ preserves $\Lambda$. 
\end{proof}

 Then we have
the lemma
\begin{lem}
Let $Q:$ \Proj\textsubscript{$B$} $\rightarrow$ \Ord\textsubscript{$B$} be
defined by $Q(\Lambda, v, h) = (i_v, \roi\Lambda)$,  $i_v(b)=b v$ for  $b\in B$, and 
for a morphism $f : (\Lambda, v, h) \rightarrow (\Lambda', v',h')$, define $Q(f) := f$. Then $Q$ is a functor.
\end{lem}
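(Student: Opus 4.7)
The plan is to verify each of the three requirements for a functor in turn. First I would check that $Q$ is well-defined on objects: the preceding lemma already shows that $\Lambda$, regarded as a subset of $H_V$, is closed under the quaternion multiplication. Since $v \in \Lambda$ is the identity of $H_V$ and $\Lambda$ is by hypothesis an $A$-lattice containing a $K$-basis of $H_V$, it is indeed an order $\roi{\Lambda}$ in $H_V$. The map $i_v(b) = bv$ lands in $\Lambda$ because $\Lambda$ is a $B$-module; it is additive for the same reason, and multiplicative since by rule (2) of the quaternion structure on $H_V$ we have $(b_1 v)\cdot(b_2 v) = b_1\cdot(b_2 v) = (b_1 b_2) v$. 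Injectivity follows because after tensoring with $K$, $i_v$ becomes the embedding $L \hookrightarrow H_V$ of the previous section.

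Second, I would check that $Q$ is well-defined on morphisms. Given a $\Proj_B$-morphism $f : (\Lambda, v, h) \to (\Lambda', v', h')$, the map $f$ is $B$-linear, sends $v$ to $v'$, and satisfies $h(x) = h'(f(x))$. Tensoring with $K$ produces an $L$-linear isomorphism $V \to V'$ of pointed hermitian spaces, which by the already established functor $G$ on $\Herm_L$ is an isomorphism of the quaternion algebras $H_V$ and $H_{V'}$. Restricting back to $\Lambda$ gives an $A$-algebra isomorphism $\roi{\Lambda} \to \roi{\Lambda'}$. The identity $f \circ i_v = i_{v'}$ follows from $f(bv) = b\cdot f(v) = bv'$ by $B$-linearity.

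Third, functoriality is immediate, since $Q$ acts as the identity on underlying set maps: $Q(\mathrm{id}) = \mathrm{id}$ and $Q(g\circ f) = g\circ f = Q(g)\circ Q(f)$.

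The only substantive step is the multiplicativity of $f$ on morphisms, and this is not really a new computation but a consequence of the corresponding fact at the level of fields established in the isomorphism of \Quat\textsubscript{$L$} and \Herm\textsubscript{$L$}. I do not anticipate a genuine obstacle here, since the quaternion multiplication on $\roi{\Lambda}$ is by construction inherited from $H_V$; the content of the lemma has already been absorbed in verifying that $\Lambda$ is closed under that multiplication, which was done in the preceding lemma.
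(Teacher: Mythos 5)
Your verification is correct and matches the paper's intent: the paper states this lemma without proof, treating it as a routine consequence of the preceding lemma (that $\Lambda$ is an order in $H_V$) together with the already established field-level functor $G$. Your filling-in of the object and morphism checks is exactly the routine argument the paper leaves to the reader, so there is nothing to correct.
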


Again, it is easy to check that the functors $P$ and $Q$ are inverses of each other. In particular: 
\begin{thm}\label{integral-correspondence}
The categories 
\Ord\textsubscript{$B$} and  \Proj\textsubscript
{$B$} are isomorphic. 
\end{thm}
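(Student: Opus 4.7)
The plan is to verify that the functors $P$ and $Q$ constructed above are mutually inverse on both objects and morphisms. The heavy lifting has already been done: the preceding lemma shows that $\Lambda$ is closed under the multiplication inherited from $H_V$, so $Q$ is well-defined on objects, while the field-level correspondence of Corollary \ref{C:field_equiv} provides the underlying categorical isomorphism over the generic fibre after tensoring with $K$.

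On objects, starting from $(\Lambda, v, h)$ in \Proj\textsubscript{$B$}, the functor $Q$ produces the order $\mathcal{O}_\Lambda$ with identity $v$ and embedding $i_v(b) = bv$; then $P$ returns the triple consisting of the underlying $B$-module, the identity element, and the reduced norm of the ambient quaternion algebra. Since the reduced norm of $H_V$ agrees with $h$ on $V \supset \Lambda$, this triple is precisely $(\Lambda, v, h)$. In the other direction, $(i, \mathcal{O})$ is sent by $P$ to $(\mathcal{O}, 1, n)$, and then $Q$ endows $\mathcal{O} \otimes_A K = H$ with the quaternion-algebra structure reconstructed from this pointed hermitian form. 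By the field-case theorem, this reconstructed structure coincides with the original multiplication on $H$; hence $\mathcal{O}_{\mathcal{O}} = \mathcal{O}$ as orders, and $i_1(b) = b \cdot 1 = i(b)$ agrees with the original embedding.

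For morphisms, both $P$ and $Q$ act as the identity on underlying maps, so the remaining point is that a $B$-module isomorphism $f : \Lambda \to \Lambda'$ preserving the pointed hermitian data is the same datum as an $A$-algebra isomorphism $\mathcal{O}_\Lambda \to \mathcal{O}_{\Lambda'}$ intertwining the $B$-embeddings. I would verify this by extending $f$ $K$-linearly to an $L$-linear map $V \to V'$ and invoking the field-case isomorphism between \Herm\textsubscript{$L$} and \Quat\textsubscript{$L$}: the extension is then an isomorphism of quaternion algebras, which restricts on $\Lambda$ to an $A$-algebra map compatible with $i_v$ and $i_{v'}$. The converse is analogous, since any $A$-algebra isomorphism of orders tensors up to a quaternion algebra isomorphism, which preserves norms and therefore the hermitian form.

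The one genuinely integral ingredient is the preceding lemma, showing that $\Lambda$ is multiplicatively closed in $H_V$; once this is in hand, the theorem reduces to combining that integrality statement with the field-case isomorphism applied after tensoring with $K$, and no further obstacle should arise.
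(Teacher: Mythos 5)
Your proposal is correct and follows essentially the same route as the paper: the paper's proof consists of the preceding lemma (that $\Lambda$ is multiplicatively closed, hence an order $\mathcal{O}_\Lambda$ in $H_V$) together with the observation that $P$ and $Q$ are then easily checked to be mutually inverse, which is exactly the check you carry out, relying as the paper does on the field-level correspondence after tensoring with $K$.
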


Arguing in the same way as in the case of fields, Corollary \ref{C:field_equiv}, 
 two pointed binary integral $B$-hermitian modules $(\Lambda, v, h)$ and $(\Lambda, u, h)$ are isomorphic. Thus the isomorphism classes 
in \Proj\textsubscript{$B$} are the same as the isomorphism classes of binary integral hermitian spaces $(\Lambda, h)$ such that $h$ represents 1. 

\begin{cor} \label{C:int_equiv} 
The functors $P$ and $Q$ give a bijection between the isomorphism classes of embeddings of $B$ into quaternionic orders of and the isomorphism classes 
of binary integral $B$-hermitian spaces $(\Lambda,h)$ such that $h$ represents 1. 
\end{cor}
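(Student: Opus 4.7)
The plan is to deduce this corollary from Theorem~\ref{integral-correspondence} in the same spirit as Corollary~\ref{C:field_equiv} was deduced from its field analogue. Since the functors $P$ and $Q$ implement an isomorphism of categories between \Ord\textsubscript{$B$} and \Proj\textsubscript{$B$}, their isomorphism classes are already in bijection. It therefore suffices to show that the forgetful map from isomorphism classes of pointed triples $(\Lambda, v, h)$ to isomorphism classes of pairs $(\Lambda, h)$ representing $1$ is a bijection. Surjectivity is immediate, so the real content is injectivity: whenever $v, u \in \Lambda$ both satisfy $h(v) = h(u) = 1$, I need to show the triples $(\Lambda, v, h)$ and $(\Lambda, u, h)$ are isomorphic in \Proj\textsubscript{$B$}.

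To produce such an isomorphism I would work inside the quaternion algebra $H_V$ with identity $v$ attached to $(V, v, h)$ by the functor $G$ of the previous section, where $V = \Lambda \otimes_A K$. Because $n(u) = h(u) = 1$, the element $u$ is a unit of $H_V$ with inverse $\conj u$, so the right multiplication map $R_u \colon V \to V$, $x \mapsto x \cdot u$, is a $K$-linear bijection. Three of the four conditions for $R_u$ to be a morphism in \Proj\textsubscript{$B$} are essentially formal: $R_u(v) = v \cdot u = u$ since $v$ is the identity of $H_V$; $h(R_u(x)) = n(xu) = n(x)n(u) = h(x)$ by multiplicativity of the reduced norm; and $R_u$ is $B$-linear because the scalar action of $b \in B$ on $V$ coincides by construction with left multiplication in $H_V$ by $i_v(b) = bv$, so associativity of $H_V$ gives $R_u(bx) = (bv) \cdot x \cdot u = b \cdot R_u(x)$.

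The one place where the integral setting demands something beyond the field case, and hence the main obstacle, is verifying that $R_u$ actually preserves the lattice $\Lambda$ and is invertible on it. For this I would invoke the preceding lemma, which identifies $\Lambda$ with an order $\mathcal{O}_\Lambda$ inside $H_V$: since $u \in \Lambda$ and orders are closed under multiplication, $R_u(\Lambda) \subseteq \Lambda$ is automatic. To see that $R_{\conj u}$ also preserves $\Lambda$, I need $\conj u \in \Lambda$; this follows from $\conj u = \tr(u) - u$ together with $\tr(u) \in A$, the latter holding because $u$, as an element of the order $\mathcal{O}_\Lambda$, satisfies its reduced characteristic polynomial with coefficients in $A$. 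With $R_u$ now confirmed to be a lattice isomorphism, it furnishes the required morphism $(\Lambda, v, h) \to (\Lambda, u, h)$ in \Proj\textsubscript{$B$}, completing the proof of the bijection.
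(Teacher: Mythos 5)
Your proposal is correct and follows essentially the same route as the paper: combine the category isomorphism of Theorem~\ref{integral-correspondence} with the observation that for two points $u,v\in\Lambda$ with $h(u)=h(v)=1$, right multiplication by $u$ in $H_V$ gives an isomorphism of the pointed objects, so the choice of point does not affect the isomorphism class. The paper states this step tersely (``arguing in the same way as in the case of fields''), and your verification that $R_u$ preserves $\Lambda$ and is invertible on it (using that $\Lambda=\mathcal O_\Lambda$ is an order and $\conj u=\tr(u)v-u\in\Lambda$) is exactly the detail being implicitly invoked there.
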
 

All that remains is to examine in which situations there is  existence of a
point with $h(v) = 1$. This can be done, in the field and domain cases, by using
local-global principles.

\section{Discriminant Relation}\label{norm}

We are in the setting of the previous section. 
\begin{lem} Let $\Lambda$ be a lattice in $H$. The fractional ideal $d(\Lambda)$ in $K$ generated by 
$\det(\tr(u_i u_j))$, for all quadruples  $(u_1, \ldots , u_4)$ of elements in $\Lambda$ is a 
square. 
\end{lem}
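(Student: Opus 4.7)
The plan is to reduce the claim to a local computation. Under a change of variables $u_i = \sum_j C_{ji} e_j$ with $C \in M_4(K)$, the Gram determinant transforms as $\det(\tr(u_i u_j)) = (\det C)^2 \det(\tr(e_i e_j))$, so localizing at a prime $\prim$ of $A$ (where $\Lambda_{\prim}$ is free of rank $4$ over the DVR $A_{\prim}$, with some basis $e_1, \ldots, e_4$) identifies $d(\Lambda)_{\prim}$ with the principal ideal $(\det(\tr(e_i e_j)))$. A fractional ideal is a square exactly when its valuation at every prime is even, so it suffices to show that $v_{\prim}(\det(\tr(e_i e_j)))$ is even for each $\prim$.

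For the valuation, I would pick a convenient $K$-basis of $H$ via Proposition \ref{quat-struct}: write $H = L + Lu$ with $\bar u = -u$, $u^2 = \theta \in K^\times$, and $um = \bar m u$ for $m \in L$, and choose any $v \in L \setminus K$ so that $1, v, u, vu$ is a $K$-basis of $H$. The relation $\overline{xu} = -u\bar x = -xu$ for $x \in L$ shows that every element of $Lu$ has reduced trace zero, which forces $L$ and $Lu$ to be orthogonal with respect to $(x,y)\mapsto \tr(xy)$ and makes the Gram matrix block diagonal. A direct calculation using $\tr(xy) = \tr_{L/K}(xy)$ on $L$ and $\tr((xu)(yu)) = \theta \tr_{L/K}(x\bar y)$ on $Lu$ yields blocks of determinants $d$ and $-\theta^2 d$, where $d = \tr_{L/K}(v)^2 - 4\, n_L(v)$. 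Hence the discriminant of the trace form on $H$ equals $-(\theta d)^2$.

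Returning to the lattice basis via a change-of-basis matrix $P \in GL_4(K)$ from $1, v, u, vu$ to $e_1, \ldots, e_4$, I would conclude $\det(\tr(e_i e_j)) = -(\theta d \det P)^2$, whose $\prim$-adic valuation is even because $-1 \in A_{\prim}^\times$. Thus $v_{\prim}(d(\Lambda))$ is even at every $\prim$, and $d(\Lambda)$ is a square fractional ideal. The main obstacle I anticipate is the block-diagonal trace-form computation in the middle step, which requires careful use of the commutation relation $um = \bar m u$; the localization reduction and the final assembly are routine.
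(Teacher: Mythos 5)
Your proposal is correct and follows essentially the same route as the paper: reduce to the free (local/PID) case, use that the Gram determinant of the trace form changes by the square of the change-of-basis determinant, and compute it explicitly in a basis $1,v,u,vu$ adapted to $H=L+Lu$, finding $-(\theta d)^2$, i.e.\ minus a square, so every local valuation is even. The only difference is organizational: the paper first fixes the reference lattice $B+Bu$ with basis $1,\pi,u,\pi u$ and then localizes, while you work prime-by-prime against a $K$-basis of $H$, which amounts to the same computation.
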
 
\begin{proof}  
Assume firstly that $A$ is a principal ideal domain. Then $d(\Lambda)$ is a principal ideal generated by $\det(\tr(u_i u_j))$ where 
 $(u_1, \ldots , u_4)$ is any $A$-basis  of $\Lambda$. Let $\Lambda'$ be another lattice and $(u'_1, \ldots , u'_4)$ its basis. 
 Let $T: H \rightarrow H$ be the linear transformation such that $T(u_i)= u_i'$ for all $i$. Then 
 \[ 
 d(\Lambda')= \det(T)^2 d(\Lambda). 
 \] 
 Hence, in order to show that $d(\Lambda)$ is a square, it suffices to do so for one lattice. Write $H=L + L u$, as in Proposition \ref{quat-struct}.  Let 
 $\Lambda= B + Bu$. Assume that $B= A[\pi]$ where $\pi$ is a root of the polynomial $x^2 + ax +b$. Pick $1, \pi, u, \pi u$ as a basis of $\Lambda$. Then the matrix of traces is 
 \[\left(\begin{matrix}
    2 & -a & 0 & 0 \\
    -a & a^2 - 2b & 0 & 0 \\
    0 & 0 & -2\theta & a\theta \\
    0 & 0 & a\theta & -2 b \theta \\
  \end{matrix}\right),\]
  and its determinant is $(a^2 - 4b)(4b - a^2) \theta^2 = -(a^2-4b)^2 \theta^2$. 
  Then $d(\Lambda)= (D_{B/A}\theta)^2$ (as ideals) where $D_{B/A}$ is the discriminant of $B$ over $A$. The general case 
 is now treated using localization. Let $\prim$ be a maximal ideal in $A$. Then, one easily checks, 
 $d(\Lambda)_{\mathfrak p}= d(\Lambda_{\mathfrak p})$. Since $A_{\mathfrak p}$ is a local Dedekind ring, hence a principal domain, $d(\Lambda_{\mathfrak p})$ must be 
 an even power of $\mathfrak p$. 
\end{proof} 

\begin{definition} 
The discriminant of an $A$-lattice $\Lambda$ in $H$ is the fractional ideal $\Delta(\Lambda)$ in $K$ such that $\Delta(\Lambda)^2=d(\Lambda)$. 
\end{definition} 

If $K=\mathbb Q$ then we can refine the notion of $\Delta(\Lambda)$ to be a rational number, the generator of this ideal. 
We pick $\Delta(\Lambda)$ to be the positive generator if 
$H\otimes_{\mathbb Q} \mathbb R$ is the matrix algebra, and negative otherwise.

\begin{definition} Let $\Lambda$ be a projective $B$-module of rank 2 and $h$ a hermitian form on $\Lambda$, not necessarily integral. 
 Let $s$ be the bilinear hermitian form on $\Lambda$ such that 
$s(x,x)=h(x)$, for all $x\in \Lambda$. Let $d(\Lambda, h)$ be the fractional ideal in $K$ generated by 
 $\det(s(v_i,v_j))$ for all pairs  $(v_1, v_2)$  of elements in $\Lambda$. 
\end{definition} 

If $K=\mathbb Q$ and $L$ a complex quadratic extension we refine $d(\Lambda,h)$ to be the positive generator of this ideal if $h$ is a positive  definite 
hermitian form on $\Lambda\otimes_{\mathbb Z} \mathbb R$ and negative otherwise. In other words this sign is the sign of $\det(s(v_i,v_j))$, for 
any basis $(v_1, v_2)$  of $\Lambda \otimes_{\mathbb Z} \mathbb R$.

\begin{thm} \label{T:discr_rel} 
 Let $\Lambda$ be a lattice in $H$ that is also a $B$-module. Then we have the following identity of fractional ideals in $K$: 
 \[ 
 \Delta(\Lambda)= D_{B/A} \cdot d(\Lambda, n). 
 \] 
 If $K=\mathbb Q$ and $L$ a complex quadratic extension, this is an identity of rational numbers. 
 
\end{thm}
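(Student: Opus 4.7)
The plan is to reduce to the PID case by localizing at each prime of $A$, verify the identity on a convenient reference lattice using the preceding lemma, and then propagate to all lattices by observing that both $\Delta(\Lambda)$ and $d(\Lambda, n)$ scale uniformly under a $B$-linear change of basis.

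First I would check that $\Delta(\cdot)$ and $d(\cdot, n)$ commute with localization at a maximal ideal $\mathfrak p$ of $A$, which reduces the problem to the case when $A$ is a local Dedekind ring, hence a PID, so that $B$ is also a PID. Then $\Lambda$ is free of rank $2$ over $B$, and $d(\Lambda, n)$ is the principal ideal generated by $\det(s(v_i, v_j))$ in any $B$-basis $(v_1, v_2)$. As reference lattice I take $\Lambda_0 = B + Bu$, where $H = L + Lu$ is the decomposition of Proposition \ref{quat-struct}. Using formula \eqref{herm-formula}, a short direct check shows that $s(x, y)$ is the $L$-component of $x\bar y$ in $H = L + Lu$; hence $s(1, u) = 0$ (since $\bar u = -u \in Lu$) and $s(u, u) = -\theta$. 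Thus $d(\Lambda_0, n) = (\theta)$, while the preceding lemma gives $d(\Lambda_0) = (D_{B/A}\theta)^2$ and so $\Delta(\Lambda_0) = (D_{B/A}\theta)$, verifying the identity on $\Lambda_0$.

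The key step will be to propagate the identity to an arbitrary $B$-lattice $\Lambda$. Choosing a $B$-basis $(v_1, v_2)$ of $\Lambda$ and letting $M \in \mathrm{GL}_2(L)$ be the matrix with $(v_1, v_2) = (1, u) M$, the Gram matrix transforms by congruence as $S_\Lambda = M^t S_{\Lambda_0} \bar M$, so $d(\Lambda, n) = n_L(\det M) \cdot d(\Lambda_0, n)$. For $d(\Lambda)$, I would view $M$ as a $K$-linear automorphism of $H$ and invoke the classical identity $\det_K(M) = n_L(\det_L(M))$; the matrix of reduced traces then transforms by $A$-congruence under the induced $A$-basis change, giving $d(\Lambda) = n_L(\det M)^2 \cdot d(\Lambda_0)$. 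Taking square roots yields $\Delta(\Lambda) = n_L(\det M) \cdot \Delta(\Lambda_0)$, so the ratio $\Delta(\Lambda)/d(\Lambda, n) = D_{B/A}$ is lattice-independent, proving the identity in general.

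For the refinement to rational numbers when $K = \mathbb Q$ and $L$ is imaginary quadratic, $n_L$ takes only positive values on $L^\times$, so the scaling argument above preserves signs, and it suffices to match signs on $\Lambda_0$. This reduces to the observations that $\theta > 0$ if and only if $H$ splits at $\infty$, while $D_{B/A} < 0$ for any imaginary quadratic $L$; a short case analysis across the two cases confirms all three sign conventions are compatible. The main obstacle is aligning the $B$-linear scaling of $d(\Lambda, n)$ (linear in $n_L(\det M)$) with the $A$-linear scaling of $d(\Lambda)$ (quadratic in $n_L(\det M)$) through the square-root definition of $\Delta(\Lambda)$; once that comparison is set up, the theorem reduces to the one explicit matrix computation on $\Lambda_0$ already recorded in the preceding lemma.
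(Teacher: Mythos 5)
Your proposal is correct and follows essentially the same route as the paper: reduce to the PID case by localization, verify the identity on the reference lattice $B+Bu$ (where both sides are generated by $D_{B/A}\theta$), and propagate to arbitrary lattices via the change-of-basis scaling together with $\det_K(T)=n_L(\det_L(T))$, finishing with the same archimedean sign check. No gaps; your sign analysis is just a more explicit version of the paper's remark that $D_{B/A}<0$ for imaginary quadratic $L$.
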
 
\begin{proof} 
This is an identity of two fractional ideals, so it can be checked by localization at every prime ideal $\prim$. 
 The localizations $A_{\mathfrak p}$ and $B_{\mathfrak p}$ are a local and a semi-local, respectively, Dedekind domains. 
Thus they are principal ideal domains \cite{serre}.  Hence, by localization, the proof can be reduced to the case of principal ideal domains. So we shall assume that 
$A$ and $B$ are both principal ideal domains. In that case $d(\Lambda, n)$ is a principal ideal generated by $\det(s(v_i,v_j))$ where $v_1, v_2$ is a $B$-basis 
of $\Lambda$.  Let $\Lambda'$ be another lattice in $H$ that is also a $B$-module. Let $v_1', v_2'$ be a $B$-basis of $\Lambda'$. 
Let $T: H \rightarrow H $ be the $L$-linear map defined by $T(v_i)=v'_i$. Then 
 \[ 
 d(\Lambda',n)= n_L({\det}_L(T))d (\Lambda, n). 
 \] 
Now note that if we view $T$ as a $K$-linear map, then $\det_K(T)= n_L(\det_L(T))$. Since $\Delta(\Lambda')= \det_K(T)\Delta(\Lambda)$, it follows that both sides of 
the proposed identity change in the same way, if we change the lattice. Hence it suffices to check the identity for one lattice. Take $\Lambda = B + B u$, then both sides 
are ideals generated by $D_{B/A} \theta$,  where $-\theta= u^2$, so the identity of ideals holds. 
The last statement is an easy check of signs, since the discriminant of complex quadratic fields is negative. 

\end{proof} 
We finish this section with the following proposition which defines the discriminant of an integral binary hermitian form, and establishes its integrality. 

\begin{proposition} \label{P:integrality_disc} 
Let $\Lambda$ be a projective $B$-module of rank 2 and $h$ an integral hermitian form on $\Lambda$. Then 
$\Delta (\Lambda, h)= D_{B/A} \cdot d(\Lambda , h)$ is an integral ideal, called the discriminant of $(\Lambda, h)$. 
\end{proposition}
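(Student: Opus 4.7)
The plan is to establish the intermediate claim $s(\Lambda,\Lambda) \subseteq \mathfrak{d}_{B/A}^{-1}$, where $\mathfrak{d}_{B/A}$ is the different ideal of $B$ over $A$, and then combine it with the identity $D_{B/A} = N_{L/K}(\mathfrak{d}_{B/A})$.

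First, since integrality of an $A$-ideal can be checked locally, I would localize at each prime $\prim$ of $A$ to reduce to the case where $A$ and $B$ are principal ideal domains, matching the setup of the proof of Theorem~\ref{T:discr_rel}. In this local case $\Lambda$ is free over $B$; pick a basis $(v_1, v_2)$, so that $d(\Lambda, h)$ is principal, generated by $\det(s(v_i, v_j))$. The matrix $M = (s(v_i, v_j))$ is hermitian, hence $\det M = \conj{\det M}$ lies in $K$.

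Next I would prove the central claim. The polarization $b(x,y) = h(x+y) - h(x) - h(y)$ takes values in $A$ since $h$ is integral on $\Lambda$, and by hermitian symmetry $b(x,y) = s(x,y) + s(y,x) = \tr_{L/K}(s(x,y))$. For any $\beta \in B$ the element $\beta v$ lies in $\Lambda$, and using $L$-linearity of $s$ in the first argument,
\[\tr_{L/K}\bigl(\beta \cdot s(v,w)\bigr) = \tr_{L/K}\bigl(s(\beta v, w)\bigr) = b(\beta v, w) \in A.\]
By the very definition of the codifferent, this forces $s(v,w) \in \mathfrak{d}_{B/A}^{-1}$. Consequently $\det M \in K \cap (\mathfrak{d}_{B/A}^{-1})^2$.

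Finally, since $\mathfrak{d}_{B/A}$ is stable under the Galois involution, in the quadratic case one has $D_{B/A}\cdot B = \mathfrak{d}_{B/A}\cdot \conj{\mathfrak{d}_{B/A}} = \mathfrak{d}_{B/A}^2$, so $D_{B/A}\cdot \det M \subseteq B$; combined with $\det M \in K$, this yields $D_{B/A}\cdot \det M \subseteq K \cap B = A$, which is the desired integrality. The main obstacle will be the codifferent step: integrality of $h$ only directly controls traces of $s$-values in $A$, and upgrading this to membership in $\mathfrak{d}_{B/A}^{-1}$ requires exploiting that $\Lambda$ is a $B$-module to feed arbitrary $\beta \in B$ into the first argument. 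Once this is in hand, the rest is a routine manipulation of the different–discriminant relation.
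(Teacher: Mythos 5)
Your argument is correct, and it takes a genuinely different route from the paper's own proof, though it matches the heuristic sketched in the paper's introduction. The paper localizes and then argues by cases via the explicit formula $s(v_1,v_2)=\bigl(\conj{l}\,b(v_1,v_2)-b(lv_1,v_2)\bigr)/(\conj{l}-l)$: at split or inert primes one picks $l$ with $\conj{l}-l$ a unit, so $\det(s(v_i,v_j))\in A$ outright, while at ramified primes one takes $l$ a uniformizer with $B_\prim=A_\prim[l]$ and uses that $D_{B/A}$ is the norm of $\conj{l}-l$. You instead prove the uniform statement $s(\Lambda,\Lambda)\subseteq\mathfrak{d}_{B/A}^{-1}$ from the trace characterization of the codifferent, using $\tr_{L/K}(\beta\, s(v,w))=b(\beta v,w)\in A$ for all $\beta\in B$ (valid since $\Lambda$ is a $B$-module and $b=\tr_{L/K}\circ\, s$), and then conclude with the standard identities $D_{B/A}=N_{L/K}(\mathfrak{d}_{B/A})$ and $\mathfrak{d}_{B/A}\conj{\mathfrak{d}_{B/A}}=\mathfrak{d}_{B/A}^2$, plus $K\cap B=A$. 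What your route buys: no case split, no need for the primitive-element claim $B_\prim=A_\prim[l]$ at ramified primes, and uniform validity including wild ramification; in fact your localization and choice of basis are not even needed, since $d(\Lambda,h)$ is by definition generated by $\det(s(v_i,v_j))$ over \emph{all} pairs $v_1,v_2\in\Lambda$, and your estimate applies to each such generator directly. What the paper's route buys is a self-contained elementary computation resting only on the explicit solution formula for $s$, without invoking the different--discriminant machinery. Both are complete proofs of the proposition.
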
 
\begin{proof} By localization, we can assume that $A$ is local. Let $v_1,v_2\in \Lambda$ and consider the entries of the matrix $(s(v_i,v_i))$. Since 
$s(v,v)=h(v)$ the diagonal entries are integral. However, the off-diagonal entries need not be integral. Indeed, for every $l\in B$, such that $l\neq \conj l$, 
we have the formula 
\[ 
s(v_1, v_2) = \dfrac{\conj{l}b(v_1,v_2) - b(lv_1, v_2)}{\conj{l} - l}. 
\] 
If $\prim$ splits or is inert in $B$, then there exists $l$ such that $\conj l - l$ is a unit. Hence $\det(s(v_i,v_j))$ is in $A$. If $\prim$ ramifies, then we can take 
$l$ to be the uniformizing element in $B$. In this case  $B=A[l ]$ and the discriminant $D_{B/A}$ is precisely the norm of $\conj l -l$. It follows that 
$D_{B/A} \cdot \det(s(v_i,v_j))$ is integral. 
 \end{proof}

\section{Representing one} 
Here we address the question when a binary hermitian form
represents 1, integral or otherwise. We first address the field case. The case of
integral representations is significantly more complicated; a local-global
principle is provided by \cite[Theorem 104:3]{omeara}, which will reduce the
problem of an integral representation of 1 to the problem of integral
representations over $p$-adic rings.

\subsection{Over a Field} Assume that $K$ is a number field. 
 Any binary $L$-hermitian form $h$ is a quaternary
quadratic form over $K$. Recall that $h$ is positive semi-definite at a real 
place of $K$ (i.e. an embedding of $K$ into $\mathbb R$) if $h$ is always greater than or equal to 0 on $V\otimes_K\mathbb R$. We say that $h$ is 
positive semi-definite or indefinite if it is positive semi-definite or
indefinite at all real places of $K$. Then we have the following

\begin{lem}
The quadratic form $h$ is positive semi-definite or indefinite if and only if
$h$ represents 1. 
\end{lem}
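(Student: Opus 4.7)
The forward direction is immediate: if some $v \in V$ satisfies $h(v)=1$, then at any real place $w$ of $K$ the form $h_w$ attains the positive value $1$, so $h_w$ cannot be negative definite, and is therefore positive definite or indefinite. Thus $h$ is positive semi-definite or indefinite at every real place.

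For the converse, the plan is to view $h$ as a non-degenerate quaternary quadratic form on $V$ regarded as a $4$-dimensional $K$-vector space, and then apply the Hasse--Minkowski theorem. Non-degeneracy transfers from the $L$-hermitian symmetric form $s$ to the $K$-bilinear form $b(x,y) = \operatorname{tr}_{L/K}(s(x,y))$, because the trace pairing on the separable extension $L/K$ is non-degenerate. After this reduction, it suffices to show that $h$ represents $1$ over every completion $K_v$.

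At a real place $v$, the hypothesis ensures $h_v$ is not negative definite, and any non-degenerate real quadratic form that is positive definite or indefinite represents every positive real number. At a complex place $v$, every non-degenerate quadratic form over $\mathbb{C}$ is universal since every element of $\mathbb{C}^\times$ is a square. The crux is the non-archimedean case, where the plan is to invoke the classical theorem that every non-degenerate quadratic form in at least five variables over a non-archimedean local field is isotropic. Applying this to the five-dimensional form $h_v \perp \langle -1\rangle$, we conclude that either $h_v$ is itself isotropic, in which case a non-degenerate isotropic form of rank $\geq 2$ is universal (given an isotropic $v$, non-degeneracy produces $w$ with $b(v,w)=1$, and $\alpha \mapsto h(\alpha v + w)$ surjects onto $K_v$), or else $h_v$ represents $1$ directly. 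In either situation $h_v$ represents $1$.

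The main obstacle is the non-archimedean step, which rests on the universal-in-five-variables theorem over $p$-adic fields. Equivalently, one may observe that, up to scaling, the only non-degenerate anisotropic quaternary form over a non-archimedean local field is the reduced norm of the unique quaternion division algebra, and that this norm is surjective onto $K_v^\times$; either formulation gives local representability of $1$ at every finite place. Once local solvability holds at every place of $K$, Hasse--Minkowski produces the desired global $v \in V$ with $h(v)=1$.
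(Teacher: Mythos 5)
Your proof is correct and is essentially the paper's argument: both rest on Hasse--Minkowski together with the isotropy of quadratic forms in five variables over non-archimedean completions, applied to the auxiliary form $h \perp \langle -1\rangle$, with the same dichotomy (either $1$ is represented directly, or $h$ is isotropic and hence universal). The only difference is packaging---the paper applies Hasse--Minkowski once globally to the five-variable form $f(v,x)=h(v)-x^2$, while you verify representability of $1$ place by place and then invoke the representation form of Hasse--Minkowski---which is an immaterial rearrangement of the same ideas.
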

\begin{proof} 
Consider the 5-dimensional space 
$V \oplus K$ with the quadratic form $f(v,x)=h(v)-x^2$. Since $V\oplus K$ is 5-dimensional, by the Hasse-Minkowski theorem, it represents 0 if and only if it represents 0 at all 
real places. If $h$ is positive semi-definite or indefinite, the form $f$ is 
semi-define at all real places, so it represents 0 at all real places. Hence $f$ 
represents 0, and if $x \neq 0$ in the representation, then $h$ represents 1 by dividing by $x$.
Otherwise, $h$ represents 0, and so it represents any element of $K$. Conversely,
if it represents 1, then it does so at every real place. Hence it cannot be
negative semi-definite, and thus, it must be positive semi-definite or
indefinite.
\end{proof}

\subsection{Over integers} 
We assume that $K=\mathbb Q$ and $L$ a quadratic imaginary field. Thus, $A=\mathbb Z$ and $B$ is the ring of integers in $L$.  Let $D=D_{B/A}$ be the discriminant of 
$L$. We shall assume that $L$ is unramified at 2, in order to avoid discussing the difficult $2$-adic theory of integral hermitian and quadratic forms.

\begin{lem} Let $\Delta<0$ be a square-free integer. Let $\Lambda$ be a projective $B$-module of rank $2$ and 
$h$ an integral hermitian form on $\Lambda$ of the discriminant $\Delta$. Then there exists $v\in \Lambda$ such that $h(v)=1$ 

\end{lem}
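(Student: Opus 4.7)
The plan is to apply the integral Hasse principle for indefinite quaternary quadratic forms, \cite[Theorem~104:3]{omeara}, to the rank-four $\mathbb Z$-quadratic lattice $(\Lambda,h)$. O'Meara's theorem will deliver an integral $v\in\Lambda$ with $h(v)=1$ as soon as two ingredients are in place: indefiniteness of $h$ at the real place, and representability of $1$ over $\mathbb Z_p$ at every rational prime $p$.

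Indefiniteness over $\mathbb R$ is immediate from $\Delta<0$: by the sign conventions of Section~\ref{norm}, the identity $\Delta=D_{L/\mathbb Q}\cdot d(\Lambda,h)$ together with $D_{L/\mathbb Q}<0$ for $L$ imaginary quadratic forces $d(\Lambda,h)$ to carry the sign corresponding to $h$ being indefinite on $\Lambda\otimes\mathbb R$.

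The substantive task is the local analysis. Fix a prime $p$, which is odd by the standing hypothesis that $L$ is unramified at $2$, and write $\Lambda_p = \Lambda\otimes_{\mathbb Z}\mathbb Z_p$. Square-freeness of $\Delta$ gives $v_p(\Delta)\le 1$, which after accounting for the different via $\Delta=D_{L/\mathbb Q}\cdot d(\Lambda,h)$ bounds the $\mathbb Z_p$-quadratic discriminant of $h_p$. I would split into cases according to the splitting behavior of $p$ in $B$ and make the Jordan decomposition of $h_p$ explicit in each. In every case the analysis shows that the unimodular Jordan block has $\mathbb Z_p$-rank at least $2$; reducing this block modulo $p$ produces a non-degenerate quadratic form on an $\mathbb F_p$-space of dimension $\ge 2$, and any such form represents every nonzero element of $\mathbb F_p$, in particular $1$. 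Hensel's lemma (valid since $p$ is odd) then lifts this mod-$p$ representation of $1$ to a $\mathbb Z_p$-integral solution in $\Lambda_p$.

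The main obstacle is the ramified case $p\mid D$: there the absorption of a factor of $p$ by the different $D$ must be disentangled from $d(\Lambda,h)$ in order to see that the hermitian form is genuinely unimodular at $p$, and that the $\mathbb Z_p$-quadratic Jordan decomposition has a unimodular block of rank $\ge 2$ rather than being a Jordan sum of two scaled unimodular pieces of rank $2$ each. Once this case is handled and local solvability holds at every prime, O'Meara's theorem supplies the desired $v\in\Lambda$ with $h(v)=1$.
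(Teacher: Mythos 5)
Your overall strategy matches the paper's: verify indefiniteness at the real place, verify that $1$ is represented by $h$ on $\Lambda\otimes\mathbb Z_p$ for every prime $p$, and conclude by the strong local--global principle for indefinite quaternary lattices (the paper cites \cite[Ch.~9, Theorem 1.5]{cassels} rather than O'Meara, but these play the same role). For odd $p$ your Jordan-block plan is essentially sound: square-freeness of $\Delta$ bounds the $\mathbb Z_p$-determinant of the quaternary form by $p^2$, which forces a unimodular Jordan block of rank $\geq 2$, and mod-$p$ nondegeneracy plus Hensel then produces a representation of $1$; this is a reasonable substitute for the paper's explicit Gram-matrix computations in the ramified case.

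However, there is a genuine gap at the prime $2$. You assert that any prime $p$ you fix ``is odd by the standing hypothesis that $L$ is unramified at $2$.'' That hypothesis only guarantees that the \emph{ramified} case does not occur at $2$; the prime $2$ itself does not disappear, and local solvability at the dyadic place is required by \cite[Theorem 104:3]{omeara} (and by Cassels' theorem) just as at every other place. Your mechanism --- diagonalize/reduce the unimodular block modulo $p$, note that a nondegenerate form over $\mathbb F_p$ of rank $\geq 2$ represents $1$, and lift by Hensel --- is exactly the step you flag as ``valid since $p$ is odd,'' so as written your argument simply does not cover $p=2$. The paper avoids this by treating every unramified prime (including $2$) without diagonalization or Hensel: integrality of the discriminant (Proposition \ref{P:integrality_disc}) shows $h$ takes a unit value on some $u\in\Lambda_p$ (otherwise $h/p$ would be integral with non-integral discriminant $\Delta/p^2$), and then surjectivity of the norm $B_p^\times\to\mathbb Z_p^\times$ for unramified $p$ --- true also at $p=2$ --- rescales $u$ to a vector of norm $1$. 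Your proof needs such an argument (or a dyadic-specific one, e.g.\ that the unimodular block is a unit multiple of the norm form of the unramified quadratic $\mathbb Z_2$-algebra, whose unit values exhaust $\mathbb Z_2^\times$) to close the case $p=2$; aside from this, and the fact that your ramified-case analysis is only sketched rather than carried out, the plan is workable.
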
 
\begin{proof}  Let $\mathbb Z_p$ be the $p$-adic completion of $\mathbb Z$. Let $B_p=B\otimes \mathbb Z_p$ and $\Lambda_p=\Lambda\otimes \mathbb Z_p$. 
Note that the subscript $p$ denotes the completion and not localization in this proof. 
Since $h$ is assumed to be indefinite, 
by \cite[Ch. 9, Theorem 1.5, page 131]{cassels}, it suffices to 
  prove that for every $p$ there exists $v\in \Lambda_p$ such that $h(v)=1$.
  Assume that $p$ does not divide $D$ i.e. $p$ does not ramify in $L$. 
 Recall  that  $\val_p(\Delta) =0$ or $1$, by the assumption on $\Delta$. 
  We claim that there exists $u\in \Lambda$ such that $h(u)=x$ where $\val_p(x) =0$. Indeed, 
 if not,  then $v\mapsto h(v)/p$ is an integral form.  But the discriminant of $h/p$ is $\Delta/p^2$, not an integer, contradicting 
   Proposition \ref{P:integrality_disc}.  Furthermore, since $p$ does not ramify in $L$,  
 the norm map $n_L: B_p^{\times}\rightarrow \mathbb Z_p^{\times}$ is surjective. Hence we can rescale $u$ by an element in 
 $B_p^{\times}$  to get $v$ such that $h(v)=1$.

 Now assume $p$ divides $D$, so $p$ is tamely ramified in $L$. In particular, $p$ is odd, and we can write 
 $B_p=\mathbb Z[\pi]$ where $\pi$ is a uniformizer of $B_p$. We can assume that $\conj \pi =-\pi$ and, 
 for the price of changing the uniformizer in $\mathbb Z_p$, that $\pi \conj \pi=p$. 
  Pick a $B_p$-basis 
 $(v_1, v_2)$ of $\Lambda_p$ and write any $v\in \Lambda_p$ as a sum $v=xv_1+ yv_2$ where $x,y\in B_p$. Then 
 \[
h(v) = \left(
  \begin{matrix}
    x & y
  \end{matrix}\right)\left(
  \begin{matrix}
    \alpha & \gamma \\
    \overline{\gamma} & \beta
  \end{matrix}\right)\left(
  \begin{matrix}
    \overline{x} \\ \overline{y}
  \end{matrix}\right)
  \] 
  where $\alpha,\beta\in \mathbb Z_p$. 
By the assumption, we have  $\val_p(\Delta) =0$ or $1$.  If $\val_p(\Delta)=1$ then $\val_{\pi} (\gamma)\geq 0$, and it is easy to see that the matrix can be diagonalized, by 
a change of $B_p$-basis in $\Lambda_p$. Hence, in this case, 
the quadratic form $h$ on $\Lambda_p$ is equivalent to a diagonal form 
 \[ 
  a_1x_1^2+ a_2 x_2^2 + a_3 px_3^2 + a_4 px_4^2
 \] 
 for some  $a_i$  in $\mathbb Z_p^{\times}$. Assume now that $\val_p(\Delta)=0$. Then $\gamma = (a+b\pi)/\pi$, for some $a\in \mathbb Z_p^{\times}$ and 
 $b\in \mathbb Z_p$.  Writing $x = x_1 + x_2\pi$ and $ y = y_1 + y_2 \pi$,  the $4\times 4$ matrix representing the quadratic form $h$ in the variables 
 $(x_1, x_2, y_1, y_2)$ is 
 \[
 \left(\begin{matrix}
\alpha & 0 & b/2 & a/2 \\
0 & \alpha p & a/2 & p b/2 \\
b/2 & a/2 & \beta & 0 \\
a/2 & p b/2 & 0 & \beta p
\end{matrix}\right).
\]
The determinant of this matrix modulo $p$ is $a^4/16 \in \mathbb Z_p^{\times}$. 
Recall that, since $p$ is odd, every $\mathbb Z_p$-integral form can be diagonalized. Since 
  the determinant is $p$-adic unit, the form is 
 integrally equivalent to a diagonal form 
  \[ 
 a_1x_1^2+ a_2 x_2^2 + a_3 x_3^2 + a_4 x_4^2 
 \] 
for some  $a_i$  in $\mathbb Z_p^{\times}$. But in each of the two cases the diagonal quartic, quadratic form 
 represents 1 by \cite[Ch. 8, Theorem 3.1, page 115]{cassels}. This completes the proof of lemma. 
\end{proof} 

The following is a combination of the previous lemma, Corollary \ref{C:int_equiv} and Theorem \ref{T:discr_rel}. 
We remind the reader that an integral binary $B$-hermitian form is a $B$-hermitian 
form $h$ on a projective $B$-module $\Lambda$ of rank 2, such that $h(\Lambda) \subseteq \mathbb Z$. 

\begin{thm} \label{integral-reps}
Let $L$ be a quadratic imaginary extension of $\mathbb Q$ of odd discriminant $D <0$. Let $B$ be its maximal order. Let $\Delta<0$ be  
 a square free integer. Then there is a natural bijection between isomorphism classes of embeddings of $B$ into quaternionic orders 
of the discriminant $\Delta$ and isomorphism classes of integral binary $B$-hermitian forms of the discriminant $\Delta$. 

\end{thm}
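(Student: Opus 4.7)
The plan is to assemble three ingredients already established: the categorical bijection of Corollary \ref{C:int_equiv}, the discriminant identity of Theorem \ref{T:discr_rel}, and the preceding lemma on representation of $1$. The theorem itself is then pure bookkeeping once these are in hand.

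First I would invoke Corollary \ref{C:int_equiv}, which already gives a bijection between isomorphism classes of embeddings of $B$ into quaternionic orders and isomorphism classes of integral binary $B$-hermitian spaces $(\Lambda, h)$ such that $h$ represents $1$. On the quaternionic side this sends an embedding $i\colon B \hookrightarrow \mathcal{O}$ to $(\mathcal{O}, 1, n)$; on the hermitian side the lattice $\Lambda$ is realised as the order $\mathcal{O}_\Lambda$ and the form $h$ as the reduced norm on it.

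Next I would show that this bijection preserves discriminants. Since the lattice $\mathcal{O}_\Lambda = \Lambda$ is a $B$-module, Theorem \ref{T:discr_rel} gives
\[
\Delta(\mathcal{O}_\Lambda) = D_{B/A} \cdot d(\Lambda, n) = \Delta(\Lambda, h),
\]
and, as both sides are integers with compatible sign conventions (Theorem \ref{T:discr_rel} explicitly handles the imaginary quadratic case), the bijection restricts to one between embeddings into orders of discriminant $\Delta$ and integral $B$-hermitian forms of discriminant $\Delta$ that represent $1$.

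Finally, the preceding lemma removes the ``represents $1$'' qualifier on the hermitian side: for square-free $\Delta < 0$ (together with the standing odd-discriminant hypothesis on $L$) every integral binary $B$-hermitian form of discriminant $\Delta$ automatically represents $1$. Thus the side condition drops out and the bijection is exactly as claimed. The real obstacle is entirely contained in that lemma --- the reduction to $p$-adic representations via Hasse--Minkowski in four variables and the separate local analyses at the unramified and tamely ramified primes --- whereas the present theorem is just the assembly of these pieces into a discriminant-preserving correspondence.
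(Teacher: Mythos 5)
Your proposal is correct and follows essentially the same route as the paper, which states explicitly that the theorem is ``a combination of the previous lemma, Corollary \ref{C:int_equiv} and Theorem \ref{T:discr_rel}'': the categorical bijection, the discriminant identity $\Delta(\mathcal O_\Lambda)=\Delta(\Lambda,h)$, and the lemma showing every integral form of square-free negative discriminant represents $1$. Nothing is missing; your assembly of the three ingredients matches the intended argument.
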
 

\section{Acknowledgements} 
We would like to thank Tonghai Yang for suggesting the relationship between optimal embeddings and integral binary hermitian forms, and to Dick Gross for a conversation on this subject. 
The first author has been supported by an NSF grant DMS-1359774. The second author was supported by REU and UROP funding at the University of Utah.

\bibliographystyle{amsplain}
\bibliography{refs}
\end{document}